\numberwithin{figure}{section}
\def\R{{\mathbb R}}
\def\C{{\mathbb C}}
\def\T{{\mathbb T}}
\def\Z{{\mathbb Z}}
\def\s{\vskip 0.25cm\noindent}
\def\e{\varepsilon}
\def\pa{\partial}
\def\build#1_#2^#3{\mathrel{
\mathop{\kern 0pt#1}\limits_{#2}^{#3}}}
\def\td_#1,#2{\mathrel{\mathop{\build\longrightarrow_{#1\rightarrow #2}^{}}}}
\newtheorem{theorem}{Theorem}
\newtheorem{corollary}{Corollary}
\newtheorem{lemma}{Lemma}
\newtheorem{remark}{Remark}
\begin{document}
\title[Explicit formula for the cubic Szeg\H{o} equation]{An explicit formula \\ for the cubic Szeg\H{o} equation}
\author{Patrick G\'erard}
\address{Universit\'e Paris-Sud XI, Laboratoire de Math\'ematiques
d'Orsay, CNRS, UMR 8628, et Institut Universitaire de France} \email{{\tt Patrick.Gerard@math.u-psud.fr}}
\author[S. Grellier]{Sandrine Grellier}
\address{F\'ed\'eration Denis Poisson, MAPMO-UMR 6628,
D\'epartement de Math\'ematiques, Universit\'e d'Orleans, 45067
Orl\'eans Cedex 2, France} \email{{\tt
Sandrine.Grellier@univ-orleans.fr}}

\subjclass[2010]{ 37K15 primary, 47B35 secondary}

\date{April 9, 2013}
\thanks{Part of this work was made while the authors were visiting CIRM in Luminy. They are grateful to this institution
for its warm hospitality. Moreover, this paper benefited from discussions with several colleagues, in particular T. Kappeler,
H. Koch, S. Kuksin and M. Zworski. We wish to thank them deeply. }

\keywords{Cubic Szeg\H{o} equation, inverse spectral transform, quasiperiodicity, energy transfer to high frequencies, instability}
\begin{abstract}
We derive an explicit formula for the general solution of the cubic Szeg\H{o} equation and of the evolution equation of the corresponding hierarchy. As an application, we prove that all the solutions corresponding to finite rank Hankel operators are quasiperiodic.
\end{abstract}

\maketitle

\section{Introduction}
This paper is a continuation of the study of dynamical properties of an integrable system introduced by the authors in \cite{GG}, \cite{GG2}.
As an evolution equation, the cubic Szeg\H{o} equation is a simple model of non dispersive dynamics. More precisely, it can be identified as a first order Birkhoff normal form for a certain nonlinear wave equation, see \cite{GG3}.  As an Hamiltonian equation, it was proved in
\cite{GG} to admit a Lax pair  and finite dimensional invariant submanifolds corresponding to some finite rank conditions. In \cite{GG2},  action angle variables were introduced on generic subsets of the phase space, and on open dense subsets of the finite rank submanifolds. However, unlike the KdV equation or the one dimensional cubic nonlinear Schr\"odinger equation, this integrable system displays some degeneracy, since the collection of its conservation laws do not control the high regularity of the solution, as observed in \cite{GG}. An important consequence of this instability phenomenon is that the action angle variables cannot be extended to the whole phase space, even when restricted to one of the  finite rank submanifolds. Our purpose in this paper is to prove a formula for the general solution of the initial value problem for this equation. In the case of generic data, this formula reduces to the one given by the action angle variables above. However, the formula  enables to study  the non generic case too, and allows in particular to establish the quasiperiodicity of all solutions lying in one of the above finite rank submanifolds, despite the already mentioned lack of a global system of action--angle variables. Finally, this formula is also very useful to revisit the instability phenomenon displayed in \cite{GG}. We now introduce the general setting of this equation.
\subsection{The setting}
Let $\T =\R /2\pi \Z $, endowed with the Haar integral
$$\int _\T f :=\frac1{2\pi} \int _0^{2\pi }f(x)\, dx\ .$$
On $L^2(\T )$, we use the inner product
$$(f\vert g):=\int _\T f\overline g\ .$$
The family of functions $({\rm e}^{ikx})_{ k\in \Z} $ is an orthonormal basis of $L^2(\T )$,
on which the components of $f\in L^2(\T )$ are the Fourier coefficients
$$\hat f(k):=(f\vert {\rm e}^{ikx})\ .$$
We introduce the closed subspace
$$L^2_+(\T ):=\{ u\in L^2(\T ): \forall k<0, \hat u(k)=0\}\ . $$
Notice that elements $u\in L^2_+(\T )$ identify to traces of  holomorphic functions $\underline u$ on the unit disc $D$ such that
$$\sup _{r<1}\int _0^{2\pi }\vert \underline u(r{\rm e}^{ix})\vert ^2dx <\infty \ ,$$
via the correspondence
$$\underline u(z):=\sum _{k=0}^\infty \hat u(k)z^k\ ,\ z\in D\ ,\ u(x)=\lim _{r\rightarrow 1} \underline u(r{\rm e}^{ix})\ ,$$
which establishes a bijective isometry between $L^2_+(\T )$ and the Hardy space of the disc.
\s
We denote by $\Pi$ the orthogonal projector from $L^2(\T )$ onto $L^2_+(\T )$,  known as the Szeg\H{o} projector,
$$\Pi \left (\sum _{k=-\infty }^\infty \hat f(k){\rm e}^{ikx} \right )=\sum _{k=0 }^\infty \hat f(k){\rm e}^{ikx}\ .$$
On $L^2_+(\T )$, we introduce the symplectic form
$$\omega (h_1,h_2)={\rm Im}(h_1\vert h_2)\ .$$
The densely defined energy functional
$$E(u):=\frac 14\int _\T \vert u\vert ^4\ ,$$
 formally corresponds to the Hamiltonian evolution equation,
 \begin{equation}\label{szego}
 i\dot u=\Pi (\vert u\vert ^2u)\ ,
\end{equation}
which we called the cubic Szeg\H{o} equation. In \cite{GG}, we solved the initial value problem  for this equation on the intersections of Sobolev spaces with $L^2_+(\T )$. More precisely, define, for $s\ge 0$, 
$$H^s_+(\T ):=H^s(\T )\cap L^2_+(\T )=\{ u\in L^2_+(\T ): \sum _{k=0}^\infty \vert \hat u(k)\vert ^2(1+k^2)^s<\infty \}\ .$$
Then equation (\ref{szego}) defines  a smooth flow on  $H^s_+(\T )$  for $s>\frac 12$, and a continuous flow on $H^{\frac 12}_+(\T )$.
The main result of this paper provides an explicit formula for the solution of this initial value problem.
\subsection{Hankel operators and the explicit formula}
Let $u\in H^{\frac 12}_+(\T )$. We denote by $H_u$ the $\C $--antilinear operator defined on $L^2_+(\T )$ as
$$H_u(h)=\Pi (u\overline h)\ ,\ h\in L^2_+(\T )\ .$$
In terms of Fourier coefficients, this operator reads
$$\widehat{H_u(h)} (n)=\sum _{p=0}^\infty \hat u(n+p)\overline{\hat h(p)}\ .$$
In particular, its Hilbert--Schmidt norm is finite since $u\in H^{\frac 12}_+(\T )$. We call $H_u$ the Hankel operator of symbol $u$.
Notice that this definition is different from the standard ones used in references \cite{N}, \cite{P}, where Hankel operators are rather defined as linear operators from $L^2_+$ into its orthogonal complement. The link between these two definitions can be easily established by means of the involution
$$f^\sharp (x)= {\rm e}^{-ix} \overline {f(x)}\ .$$
Notice that, with our definition, $H_u$ satisfies the following self adjointness identity,
\begin{equation}\label{selfadjoint}
(H_u(h_1)\vert h_2)=(H_u(h_2)\vert h_1)\ ,\ h_1, h_2\in L^2_+(\T )\ .
\end{equation}
A fundamental property of Hankel operators is their connection with the shift operator $S$, defined on $L^2_+(\T )$ as
$$ Su (x)= {\rm e}^{ix}u(x)\ .$$
This property reads
$$S^*H_u=H_uS=H_{S^*u}\ ,$$
where $S^*$ denotes the adjoint of $S$. We denote by $K_u$ this operator, and call it the shifted Hankel operator of symbol $u$.
Notice that $K_u$ is Hilbert--Schmidt and self adjoint as well. As a consequence, operators $H_u^2$ and $K_u^2$ are $\C $--linear
trace class positive operators on $L^2_+(\T )$. Moreover, they are related by the following important identity,
\begin{equation}\label{K2H}
K_u^2=H_u^2-(\cdot \vert u)u\ .
\end{equation}
\begin{theorem}\label{explicit}
Let $u_0\in H^{\frac 12}_+(\T )$, and $u\in C(\R ,H^{\frac 12}_+(\T ))$ be the solution of equation (\ref{szego}) such that $u(0)=u_0$.
Then
$$\underline u(t,z)=((I-z{\rm e}^{-itH_{u_0}^2}{\rm e}^{itK_{u_0}^2}S^*)^{-1}{\rm e}^{-itH_{u_0}^2}u_0\, \vert \, 1)\ .$$
\end{theorem}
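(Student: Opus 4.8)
The plan is to combine the Lax pair structure of \cite{GG} with the two elementary identities $u=H_u(1)$ and $S^*u=K_u(1)$, read off the flow of $u$ from the flow of $H_u$, and then recognize the resulting formula as the resolvent generating function. Writing $T_b(h)=\Pi(bh)$ for the Toeplitz operator of symbol $b$, I recall from \cite{GG} that a solution $u$ of \eqref{szego} satisfies the two Lax equations $\dot H_u=[B_u,H_u]$ and $\dot K_u=[C_u,K_u]$, where $B_u=\frac i2 H_u^2-iT_{|u|^2}$ and $C_u=\frac i2 K_u^2-iT_{|u|^2}$. Since $|u|^2$ is real and $H_u^2,K_u^2$ are self adjoint (by \eqref{selfadjoint}), both $B_u,C_u$ are anti--self--adjoint, so the propagators $U(t)$, $V(t)$ defined by $\dot U=B_uU$, $\dot V=C_uV$, $U(0)=V(0)=I$, are unitary and yield $H_{u(t)}=U(t)H_{u_0}U(t)^*$, $K_{u(t)}=V(t)K_{u_0}V(t)^*$. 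I will also use the elementary identity $\underline v(z)=((I-zS^*)^{-1}v\vert 1)$, valid for $v\in L^2_+(\T)$, which is just the expansion $\hat v(n)=((S^*)^nv\vert 1)$.

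First I would extract $u(t)$ itself from the flow. Since \eqref{szego} reads $\dot u=-iT_{|u|^2}u=B_uu-\frac i2 H_u^2u$, the vector $w=U^*u$ obeys $\dot w=-\frac i2 U^*H_u^2u=-\frac i2 H_{u_0}^2w$, whence $u(t)=U(t)\,{\rm e}^{-\frac{it}2 H_{u_0}^2}u_0$. The same computation applied to the constant function $1$, using $B_u(1)=-\frac i2 H_u^2(1)$ (because $T_{|u|^2}(1)=\Pi(|u|^2)=H_u^2(1)$), gives $U(t)^*1={\rm e}^{\frac{it}2 H_{u_0}^2}1$. These two identities are precisely the mechanism by which two half--exponentials recombine into the full ${\rm e}^{-itH_{u_0}^2}$ of the statement. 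Inserting them into the generating function and conjugating the shift by the unitary $U$,
$$\underline u(t,z)=((I-zS^*)^{-1}u(t)\vert 1)=\left({\rm e}^{-\frac{it}2 H_{u_0}^2}(I-zR)^{-1}{\rm e}^{-\frac{it}2 H_{u_0}^2}u_0\ \Big\vert\ 1\right),\qquad R:=U(t)^*S^*U(t).$$
Comparing with the claimed expression reduces Theorem \ref{explicit} to the single operator identity $R={\rm e}^{-\frac{it}2 H_{u_0}^2}{\rm e}^{itK_{u_0}^2}S^*{\rm e}^{-\frac{it}2 H_{u_0}^2}$, which, setting $M(t):=U(t){\rm e}^{-\frac{it}2 H_{u_0}^2}$ (a unitary with $u(t)=M(t)u_0$, $M^*1={\rm e}^{itH_{u_0}^2}1$, $H_{u(t)}^2=MH_{u_0}^2M^*$), is equivalent to
$$M(t)^*S^*M(t)={\rm e}^{itK_{u_0}^2}S^*{\rm e}^{-itH_{u_0}^2}=:\rho(t).$$

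The heart of the argument, and the step I expect to be the main obstacle, is this last identity; I would prove it by showing both sides solve the same linear ODE with value $S^*$ at $t=0$. The decisive computation is for the explicit right hand side $\rho$: differentiating and using \eqref{K2H} to commute ${\rm e}^{itK_{u_0}^2}$ past $H_{u_0}^2$, together with the shift commutator $[S^*,H_{u_0}^2]=-(\cdot\vert Su_0)u_0+(\cdot\vert 1)S^*H_{u_0}^2 1$, one finds that two rank one contributions cancel and $\dot\rho$ collapses to the single rank one operator $-i(\cdot\vert {\rm e}^{itH_{u_0}^2}1)\,{\rm e}^{itK_{u_0}^2}S^*H_{u_0}^2 1$. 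On the other side, $\frac{d}{dt}(M^*S^*M)=M^*[S^*,B_u]M+\frac i2[H_{u_0}^2,M^*S^*M]$, and feeding in the $H_u$--Lax pair, the relations $M^*u=u_0$, $M^*1={\rm e}^{itH_{u_0}^2}1$, and the Brown--Halmos identities $S^*S=I$, $SS^*=I-(\cdot\vert 1)1$, $S^*T_bS=T_b$, one checks that $M^*S^*M$ satisfies the very same equation; uniqueness then yields $M^*S^*M=\rho$. The delicate point throughout is the exact rank one bookkeeping for the shift commutators and the cancellation triggered by \eqref{K2H}; all the functional analysis (trace class, Hilbert--Schmidt convergence of the Neumann series for $|z|<1$) is harmless, and if needed one may first establish the identity on the dense set of finite rank symbols, where every operator acts on a finite dimensional space, and pass to the limit by continuity of the flow on $H^{1/2}_+(\T)$.
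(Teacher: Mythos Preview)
Your argument is correct and complete in outline; the commutator
$[S^*,H_{u_0}^2]=-(\cdot\vert Su_0)u_0+(\cdot\vert 1)S^*H_{u_0}^21$ together with \eqref{K2H} indeed collapses $\dot\rho$ to the rank--one expression you wrote, and since $\dot M=-iT_{|u|^2}M$ one gets $\frac{d}{dt}(M^*S^*M)=iM^*[T_{|u|^2},S^*]M=-i(\cdot\vert e^{itH_{u_0}^2}1)\,(M^*S^*M)(H_{u_0}^2e^{itH_{u_0}^2}1)$, which is a linear ODE in the unknown operator that $\rho$ also satisfies, so uniqueness applies.

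However, your route differs from the paper's in one essential respect: you never invoke the second propagator $V(t)$ coming from the $K_u$ Lax pair. The paper's proof is shorter precisely because it does. Writing $U^*S^*UH_{u_0}=U^*K_{u(t)}U=U^*V\,K_{u_0}\,V^*U$ and observing that $W:=U^*V$ satisfies the elementary ODE $\dot W=\frac{i}{2}(WK_{u_0}^2-H_{u_0}^2W)$ gives $W=e^{-\frac{it}{2}H_{u_0}^2}e^{\frac{it}{2}K_{u_0}^2}$ immediately, and then the desired identity for $U^*S^*U$ drops out by purely algebraic manipulations using $K_{u_0}=S^*H_{u_0}$. This avoids all the rank--one commutator bookkeeping that you flagged as the delicate point; the cancellation you engineer by hand is, in the paper, absorbed into the very definition of $C_u-B_u=\frac{i}{2}(K_u^2-H_u^2)$. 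Your approach has the minor advantage of yielding $M^*S^*M=\rho$ on all of $L^2_+$ rather than only on the range of $H_{u_0}$, but since every term in the Neumann series lives in that range anyway, this extra generality is not needed.
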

The proof of this theorem will be given in section \ref{preuve}. It is a non trivial consequence of the Lax pair structure recalled in section \ref{Lax}.
Our second result concerns the special case of data $u_0$ such that $H_{u_0}$ is of finite rank. In this case, operators $S^*, H_{u_0}^2, K_{u_0}^2$ act on a finite dimensional space containing $u_0$, and the implementation of the above formula reduces to diagonalization of matrices.
\subsection{Finite rank manifolds and quasiperiodicity}\label{finite}
Let $d$ be a positive integer. We denote by $\mathcal V(d)$ the set of $u\in H^{\frac 12}_+(\T )$ such that
$$ {\rm rk} H_{u}=\left [\frac{d+1}2\right ]\ ,\ {\rm rk} K_{u}= \left [\frac{d}2\right ]\ ,$$
where $[x]$ denotes the integer part of $x\in \R $. Using Kronecker's theorem \cite{Kr}, one can show that $\mathcal V(d)$ is a complex K\"ahler 
submanifold of $L^2_+(\T )$ of dimension $d$ --- see the appendix of \cite{GG} ---, consisting of rational functions of ${\rm e}^{ix}$.
More precisely,  $\mathcal V(d)$ consists of functions of the form
$$u(x)=\frac{A({\rm e}^{ix})}{B({\rm e}^{ix})}\ ,$$
where $A, B$ are polynomials with no common factors, $B$ has no zero in the closed unit disc, $B(0)=1$, and
\begin{itemize}
\item If $d=2N$ is even, the degree of $A$ is at most $N-1$ and the degree of $B$ is exactly $N$.
\item If $d=2N+1$ is odd, the degree of $A$ is exactly $N$ and the degree of $B$ is at most $N$.
\end{itemize}
Using the Lax pair structure recalled in section \ref{Lax}, $\mathcal V(d)$ is invariant through the flow of (\ref{szego}). 
\begin{theorem}\label{quasip}
For every $u_0\in \mathcal V(d)$, the map
$$t\in \R \mapsto u(t)\in \mathcal V(d)$$
is quasiperiodic. More precisely, there exist a positive integer $n$, real numbers $\omega _1,\cdots ,\omega _n$, and a smooth mapping
$$\Phi : \T ^n\rightarrow \mathcal V(d)$$
such that, for every $t\in \R $,
$$u(t)=\Phi (\omega _1t,\cdots ,\omega _nt)\ .$$
In particular, for every $s>\frac 12$,
\begin{equation}\label{Hs}
\sup _{t\in \R }\Vert u(t)\Vert _{H^s} <+\infty \ .
\end{equation}
\end{theorem}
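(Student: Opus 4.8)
The plan is to turn the explicit formula of Theorem~\ref{explicit} into a finite--dimensional statement and then read off the frequencies from the spectra of $H_{u_0}^2$ and $K_{u_0}^2$. First I would introduce the finite dimensional space $V:=\mathrm{Ran}\,H_{u_0}$, which is finite dimensional because $u_0\in\mathcal V(d)$ forces $H_{u_0}$ to have finite rank. Since $H_{u_0}(\mathrm e^{ipx})=S^{*p}u_0$, the space $V$ is the linear span of $\{S^{*p}u_0:p\ge0\}$; in particular $u_0=H_{u_0}(1)\in V$ and $S^*V\subseteq V$. Moreover $H_{u_0}^2$ maps all of $L^2_+(\T)$ into $V$, and by (\ref{K2H}) one has $K_{u_0}^2v=H_{u_0}^2v-(v\vert u_0)u_0\in V$ for every $v\in V$, so $V$ is invariant under $S^*$, $H_{u_0}^2$ and $K_{u_0}^2$, hence under the unitary groups $\mathrm e^{-itH_{u_0}^2}$ and $\mathrm e^{itK_{u_0}^2}$. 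Therefore $w(t):=\mathrm e^{-itH_{u_0}^2}u_0\in V$, and expanding the resolvent in Theorem~\ref{explicit} as a Neumann series shows that every vector $\mathcal N(t)^k w(t)$ lies in $V$, where $\mathcal N(t):=\mathrm e^{-itH_{u_0}^2}\mathrm e^{itK_{u_0}^2}S^*$. Thus $\underline u(t,z)=((I-z\mathcal N(t)|_V)^{-1}w(t)\vert1)$ is computed entirely inside the finite dimensional space $V$.

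Next I would diagonalize: the restrictions of $H_{u_0}^2$ and $K_{u_0}^2$ to $V$ are self adjoint, with positive eigenvalues $\lambda_1,\dots$ and $\mu_1,\dots$. Let $\omega_1,\dots,\omega_n$ be a $\Z$--basis of the subgroup of $(\R,+)$ generated by all the $\lambda_j$ and $\mu_k$; then each matrix entry of $\mathrm e^{-itH_{u_0}^2}$ (in the $H_{u_0}^2$--eigenbasis) and of $\mathrm e^{itK_{u_0}^2}$ (in the $K_{u_0}^2$--eigenbasis) is a monomial in the variables $\mathrm e^{i\omega_\ell t}$. Hence there are smooth maps $\theta\mapsto\tilde E_H(\theta),\tilde E_K(\theta)$ on $\T^n$, unitary for every $\theta$, that restrict to these groups along $\theta=(\omega_1t,\dots,\omega_nt)$. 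Setting $\mathcal N(\theta):=\tilde E_H(\theta)\tilde E_K(\theta)S^*|_V$ and $w(\theta):=\tilde E_H(\theta)u_0$, I would define $\Phi(\theta)\in L^2_+(\T)$ by $\underline{\Phi(\theta)}(z)=((I-z\mathcal N(\theta))^{-1}w(\theta)\vert1)$, so that by construction $u(t)=\Phi(\omega_1t,\dots,\omega_nt)$.

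The remaining task is to check that $\Phi$ is well defined and smooth with values in $\mathcal V(d)$. Well--definedness for $|z|<1$ is immediate: $\tilde E_H(\theta)\tilde E_K(\theta)$ is unitary and $S^*|_V$ is a contraction, so $\Vert\mathcal N(\theta)\Vert\le1$ and the Neumann series converges. The delicate point is the behaviour on the unit circle. If $\nu$ were an eigenvalue of $\mathcal N(\theta)$ with $|\nu|=1$ and eigenvector $v$, then unitarity of $\tilde E_H\tilde E_K$ would give $\Vert S^*v\Vert=\Vert v\Vert$; since $\Vert S^*v\Vert^2=\Vert v\Vert^2-|(v\vert1)|^2$, this forces $(v\vert1)=0$, and the unimodular part of the contraction $\mathcal N(\theta)$ is in fact unitary, so such $v$ span the corresponding eigenspace. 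Consequently the residue of $((I-z\mathcal N(\theta))^{-1}w(\theta)\vert1)$ at the boundary point $z=1/\nu$ vanishes, $\underline{\Phi(\theta)}$ extends holomorphically across $|z|=1$, and $\Phi(\theta)$ is a smooth rational function of $\mathrm e^{ix}$.

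The step I expect to be the main obstacle is verifying that the rank conditions defining $\mathcal V(d)$ persist for \emph{every} $\theta\in\T^n$, not merely along the dense orbit $\{(\omega_1t,\dots,\omega_nt)\}$: one must show that $\mathrm{rk}\,H_{\Phi(\theta)}$ and $\mathrm{rk}\,K_{\Phi(\theta)}$ equal $\mathrm{rk}\,H_{u_0}$ and $\mathrm{rk}\,K_{u_0}$, since rank is only lower semicontinuous and could a priori drop off the orbit. I would obtain this from an isospectral statement, namely that $H_{\Phi(\theta)}$ is unitarily equivalent to $H_{u_0}$ for all $\theta$, extending the Lax--pair conjugation $H_{u(t)}=U(t)H_{u_0}U(t)^*$ of \cite{GG} from the orbit to the whole torus and thereby keeping all eigenvalue multiplicities, hence the ranks, constant. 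Granting this, $\Phi:\T^n\to\mathcal V(d)$ is smooth and its image is compact; since elements of $\mathcal V(d)$ are rational functions whose denominators stay uniformly away from the closed unit disc on the compact set $\Phi(\T^n)$, their $H^s$ norms are uniformly bounded, and (\ref{Hs}) follows from $\sup_{t\in\R}\Vert u(t)\Vert_{H^s}\le\max_{\theta\in\T^n}\Vert\Phi(\theta)\Vert_{H^s}$.
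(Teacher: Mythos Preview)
Your overall framework---restricting the explicit formula of Theorem~\ref{explicit} to the finite-dimensional range of $H_{u_0}$ and parametrising the two exponentials by points of a torus---coincides with the paper's strategy, and you have correctly isolated the one real difficulty: showing that $\Phi(\theta)\in\mathcal V(d)$ for \emph{every} $\theta\in\T^n$, not only along the Szeg\H{o} orbit. However, your proposed resolution, ``extending the Lax--pair conjugation $H_{u(t)}=U(t)H_{u_0}U(t)^*$ from the orbit to the whole torus'', is a restatement of the goal rather than a mechanism: $U(t)$ is obtained by integrating the nonautonomous ODE $\dot U=B_{u(t)}U$, whose coefficient depends on the whole past trajectory, and there is no reason for $U(t)$ itself to factor through the torus variable. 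Your argument that unimodular eigenvalues of $\mathcal N(\theta)$ produce only removable singularities is correct and shows that $\underline{\Phi(\theta)}$ extends holomorphically across the unit circle, but it does not prevent the poles of $\underline{\Phi(\theta)}$ from approaching the unit circle as $\theta$ varies, nor does it yield the rank equalities; without those, neither the smoothness of $\Phi$ as a map into $\mathcal V(d)$ nor the uniform $H^s$ bound~(\ref{Hs}) follows.

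The paper fills this gap by a device you do not invoke: the Szeg\H{o} hierarchy. It introduces the Hamiltonians $J^y(u)=((I+yH_u^2)^{-1}1\mid 1)$ for $y>0$, proves that each flow $\dot u=X_{J^y}(u)$ admits Lax pairs for both $H_u$ and $K_u$ (Theorem~\ref{Laxhier}) and hence preserves $\mathcal V(d)$ globally (Corollary~\ref{globalflow}), and derives an explicit formula of the same shape (Theorem~\ref{explicithat}) with ${\rm e}^{-itH_{u_0}^2}$, ${\rm e}^{itK_{u_0}^2}$ replaced by ${\rm e}^{2itg(H_{u_0}^2)}$, ${\rm e}^{-2itg(K_{u_0}^2)}$ for a suitable rational function $g$. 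Given an arbitrary $\omega\in\T^\Sigma$, one then chooses $n=|\Sigma|$ pairwise distinct parameters $y_1,\dots,y_n$ and solves the invertible linear system $\omega(s)=-2\sum_k a_ky_kJ^{y_k}(u_0)/(1+y_ks)$, $s\in\Sigma$, for $(a_k)$, so that $\Phi(\omega)$ is realised as the time-$1$ value of the flow of $\sum_k a_kJ^{y_k}$ starting at $u_0$; since that flow stays in $\mathcal V(d)$, the rank conditions hold at $\Phi(\omega)$. In short, the isospectrality you need is obtained not by continuing a single Lax pair to the torus, but by manufacturing, for each torus point, an \emph{ad hoc} flow in the hierarchy carrying its own Lax pair. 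Your more elementary route would be attractive if it could be completed, but as written the crucial step is missing.
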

Notice that property (\ref{Hs}) was established in Theorem 7.1 of \cite{GG} under the additional generic assumption that $u_0$ belongs to 
$\mathcal V(d)_{\rm gen}$, namely that  the vectors $H_{u_0}^{2n}(1), n=1,\dots , N=\left [\frac{d+1}2\right ]$,  are linearly independent. Our general formula allows us to extend property (\ref{Hs}) to all data in $\mathcal V(d)$. However, it should be emphasized that, while it is clear from the arguments of Lemma 5 in \cite{GG} that estimate (\ref{Hs}) is uniform if $u_0$  varies in a compact subset of $\mathcal V(d)_{\rm gen}$, (\ref{Hs}) does not follow from an a priori estimate on the whole of $\mathcal V(d)$, in the sense that one can find families of data $(u_0^\e )$ in $\mathcal V(d)$, belonging to a compact subset of $\mathcal V(d)$, in particular bounded in all $H^s$, and such that
$$\sup _\e \sup _{t\in \R }\Vert u^\e (t)\Vert _{H^s} =\infty \ ,\ s>\frac 12\ ,$$
see corollary 5 of \cite{GG}.
We shall revisit this phenomenon in section \ref{instab} thanks to the explicit formula of Theorem \ref{explicit}.
\s  Finally, let us mention that the generalization of property (\ref{Hs}) to non finite rank solutions is an open problem.
\subsection{Organization of the paper}
Section \ref{Lax} is devoted to recalling the crucial Lax pair structure attached to equation (\ref{szego}). As a fundamental consequence,
$H_{u(t)}$ and $K_{u(t)}$ remain unitarily equivalent to their respective initial data. In section \ref{preuve}, we take advantage of this structure 
to derive Theorem \ref{explicit}. In section \ref{instab}, we apply this theorem to the particular case of data $u_0$ belonging to $\mathcal V(3)$,
which sheds a new light on the instability phenomenon. The next two sections are devoted to the proof of Theorem \ref{quasip}. As a preparation, we first  generalize the explicit formula to Hamiltonian flows associated to energies 
$$J^y(u):=((I+yH_u^2)^{-1}(1)\vert 1)\ ,$$
where $y$ is a positive parameter. The quasi periodicity theorem then follows by observing, through an interpolation argument, that the map $\Phi $ in the statement of Theorem \ref{quasip} can be defined as the value at time $1$ of the Hamiltonian flow corresponding to  a suitable  linear combination of energies $J^y$. 
\section{The Lax pair structure}\label{Lax}
In this section, we recall the Lax pairs associated to the cubic Szeg\H{o} equation, see \cite{GG}, \cite{GG2}. 
First we introduce the notion of a Toeplitz operator. Given $b\in L^\infty (\T )$, we define $T_b:L^2_+\to L^2_+$ as
$$T_b(h)=\Pi (bh)\ ,\ h\in L^2_+\ .$$
Notice that $T_b$ is bounded and $T_b^*=T_{\overline b}$. 
The starting point is the following lemma.
\begin{lemma}\label{abc}
Let $a, b, c\in H^s_+$, $s>\frac 12$. Then
$$H_{\Pi (a\overline b c)}=T_{a\overline b}H_c+H_aT_{b\overline c} -H_aH_bH_c\ .$$
\end{lemma}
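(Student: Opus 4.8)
The plan is to verify this as an identity between $\C$-antilinear operators on $L^2_+$ by evaluating both sides on an arbitrary $h\in L^2_+$ and reducing every term to the single ``master'' function $\Pi(a\overline b c\,\overline h)$. The only tools needed are the orthogonal decomposition $I=\Pi+(I-\Pi)$, the defining formulas $H_u(h)=\Pi(u\overline h)$ and $T_b(h)=\Pi(bh)$, and careful bookkeeping of Fourier supports; the hypothesis $s>\tfrac12$ enters only through the Sobolev embedding $H^s_+\hookrightarrow L^\infty$, which guarantees that all the pointwise products below lie in $L^2$.

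First I would treat the left-hand side. Writing $\Pi(a\overline b c)=a\overline b c-(I-\Pi)(a\overline b c)$ and using $H_{\Pi(a\overline b c)}(h)=\Pi\big(\Pi(a\overline b c)\,\overline h\big)$, the contribution of $(I-\Pi)(a\overline b c)$ is supported in frequencies $\le -1$; since $\overline h$ is supported in frequencies $\le 0$, the product has only negative frequencies and is annihilated by $\Pi$. Hence the left-hand side evaluated at $h$ equals exactly $\Pi(a\overline b c\,\overline h)$.

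Next I would expand the first term of the right-hand side in the same way. Setting $w:=(I-\Pi)(c\overline h)$, so that $\Pi(c\overline h)=c\overline h-w$, one gets $T_{a\overline b}H_c(h)=\Pi\big(a\overline b\,\Pi(c\overline h)\big)=\Pi(a\overline b c\,\overline h)-\Pi(a\overline b\,w)$. The first piece already matches the left-hand side, so it remains to show that the remaining two terms of the right-hand side produce exactly $+\Pi(a\overline b\,w)$. Using antilinearity of $H_a$, I would combine them as $H_a\big(T_{b\overline c}(h)-H_bH_c(h)\big)$ and compute the inner difference: since $\overline{\Pi(c\overline h)}=\overline c h-\overline w$, one finds $H_bH_c(h)=\Pi(b\overline c h)-\Pi(b\overline w)$, whence $T_{b\overline c}(h)-H_bH_c(h)=\Pi(b\overline w)$.

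The step requiring the most care is the support bookkeeping for $w$. Because $w$ has frequencies $\le -1$, its conjugate $\overline w$ lies in $L^2_+$ (indeed with vanishing zeroth coefficient), so $b\overline w$ has only positive frequencies and $\Pi(b\overline w)=b\overline w$. Applying $H_a$ then gives $H_a(b\overline w)=\Pi\big(a\,\overline{b\overline w}\big)=\Pi(a\overline b\,w)$, which is precisely the missing term. Adding up, the right-hand side at $h$ equals $\Pi(a\overline b c\,\overline h)-\Pi(a\overline b\,w)+\Pi(a\overline b\,w)=\Pi(a\overline b c\,\overline h)$, matching the left-hand side and completing the proof. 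The main obstacle is purely organizational --- keeping straight how conjugation exchanges the frequency half-lines $\{k\ge 0\}$ and $\{k\le 0\}$ at each application of $\Pi$ --- rather than any genuine analytic difficulty.
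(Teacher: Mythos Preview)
Your proof is correct and follows essentially the same route as the paper's own argument. Both proofs split off the piece $(I-\Pi)(c\overline h)$ (your $w$, the paper's implicit object in $g:=b\,\overline{(I-\Pi)(c\overline h)}$), observe that its conjugate lands in $L^2_+$ so that $\Pi(b\overline w)=b\overline w$, and then identify $T_{b\overline c}(h)-H_bH_c(h)$ with this quantity; the only differences are organizational---you spell out why $H_{\Pi(a\overline b c)}(h)=\Pi(a\overline b c\,\overline h)$, which the paper takes for granted.
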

\begin{proof}
Given $h\in L^2_+$, we have
\begin{eqnarray*}
H_{\Pi (a\overline b c)}(h)&=&\Pi (a\overline b c\overline h)=\Pi (a\overline b\Pi (c\overline h))+\Pi (a\overline b(I-\Pi )(c\overline h))\\
&=&T_{a\overline b}H_c(h)+H_a(g)\ ,\ g:=b\overline {(I-\Pi )(c\overline h)}\ .
\end{eqnarray*}
Since $g\in L^2_+$, 
$$g=\Pi (g)=\Pi (b\overline ch)-\Pi (b\overline {\Pi (c\overline h)})=T_{b\overline c}(h)-H_bH_c(h)\ .$$
This completes the proof.
\end{proof}
Using Lemma \ref{abc} with $a=b=c=u$, we get
\begin{equation}\label{Hu3}
H_{\Pi (\vert u\vert ^2u)}=T_{\vert u\vert ^2}H_u+H_uT_{\vert u\vert ^2}-H_u^3\ .
\end{equation}
\begin{theorem}\label{Lax pair}
Let $u\in C^\infty (\R ,H^s_+), s>\frac 12, $ be a solution of (\ref{szego}). Then
\begin{eqnarray*}
\frac{dH_u}{dt}&=&[B_u,H_u]\ ,\ B_u:=\frac i2H_u^2-iT_{\vert u\vert ^2}\ ,\\
\frac{dK_u}{dt}&=&[C_u,K_u]\ ,\ C_u:=\frac i2K_u^2-iT_{\vert u\vert ^2}\ .
\end{eqnarray*}
\end{theorem}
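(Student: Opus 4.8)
The plan is to reduce both Lax equations to the algebraic identity (\ref{Hu3}) and a shifted analogue of it, while carefully tracking the antilinearity of $H_u$ and $K_u$ against the imaginary coefficients appearing in $B_u$ and $C_u$.

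First I would exploit that, although each $H_u$ is antilinear in its argument, the map $u\mapsto H_u$ is itself $\C$-linear: from $H_v(h)=\Pi(v\overline h)$ one reads off $H_{\lambda v+w}=\lambda H_v+H_w$. Differentiating in $t$ thus gives $\dot H_u=H_{\dot u}$, and since $\dot u=-i\Pi(\vert u\vert^2u)$ by (\ref{szego}), I obtain $\dot H_u=-iH_{\Pi(\vert u\vert^2u)}$. Substituting (\ref{Hu3}) yields $\dot H_u=-i(T_{\vert u\vert^2}H_u+H_uT_{\vert u\vert^2}-H_u^3)$. It then remains only to check that this equals $[B_u,H_u]$. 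The sole delicate point is the antilinearity bookkeeping: the imaginary scalars in $B_u$ get conjugated when moved past $H_u$, so that $B_uH_u=\frac i2H_u^3-iT_{\vert u\vert^2}H_u$ while $H_uB_u=-\frac i2H_u^3+iH_uT_{\vert u\vert^2}$. Subtracting reproduces exactly $-i(T_{\vert u\vert^2}H_u+H_uT_{\vert u\vert^2}-H_u^3)$, which establishes the first Lax equation.

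For the second equation I would use that $S$ is independent of $t$ and that $K_u=H_uS$, so that $\dot K_u=\dot H_u\,S=[B_u,H_u]S=B_uK_u-H_uB_uS$. Comparing with $[C_u,K_u]=C_uK_u-H_uSC_u$ gives the discrepancy $\dot K_u-[C_u,K_u]=(B_u-C_u)K_u-H_u(B_uS-SC_u)$. By (\ref{K2H}) one has $B_u-C_u=\frac i2(H_u^2-K_u^2)=\frac i2(\cdot\vert u)u$, so the first term applied to $h$ is $\frac i2(K_uh\vert u)u$. For the bracket $B_uS-SC_u$ I would establish the two shifted commutation identities $H_u^2S-SK_u^2=(\cdot\vert K_uu)\,1$ and $T_{\vert u\vert^2}S-ST_{\vert u\vert^2}=(\cdot\vert K_uu)\,1$. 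The first follows from $H_uS=K_u$ together with the relation $SK_uh=H_uh-(u\vert h)1$, itself a consequence of $SS^*=I-(\cdot\vert 1)1$ and the identity $(H_uh\vert 1)=(u\vert h)$ obtained from (\ref{selfadjoint}); the second is a direct Fourier computation resting on $K_uu=\Pi({\rm e}^{-ix}\vert u\vert^2)$. Since both commutators equal the \emph{same} rank-one operator, $B_uS-SC_u=(\frac i2-i)(\cdot\vert K_uu)1=-\frac i2(\cdot\vert K_uu)1$, and applying the antilinear $H_u$ (with the attendant conjugation) gives $H_u(B_uS-SC_u)h=\frac i2(K_uu\vert h)u$. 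The two surviving terms then cancel by the self-adjointness identity $(K_uh\vert u)=(K_uu\vert h)$ coming from (\ref{selfadjoint}), so the discrepancy vanishes and the second Lax equation holds.

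The hard part will be the $K_u$ equation: the interaction of the shift $S$ with both $H_u^2$ and the Toeplitz operator $T_{\vert u\vert^2}$ each produces a rank-one defect, and the crux is that these defects, together with the rank-one discrepancy $B_u-C_u$ furnished by (\ref{K2H}), cancel exactly. Keeping the conjugations correct at every point where the antilinear operators $H_u$, $K_u$ meet the imaginary coefficients of $B_u$, $C_u$ is the persistent source of potential sign errors, and is where I would concentrate the verification.
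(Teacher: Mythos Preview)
Your argument is correct. For the $H_u$ equation it coincides with the paper's proof word for word. For the $K_u$ equation you take a somewhat different route: the paper first establishes a shifted analogue of (\ref{Hu3}),
\[
K_{\Pi(\vert u\vert^2u)}=T_{\vert u\vert^2}K_u+K_uT_{\vert u\vert^2}-K_u^3\ ,
\]
by pushing $S$ through each term of $H_{\Pi(\vert u\vert^2u)}S$ via the single commutation identity $T_{\vert u\vert^2}(Sh)=ST_{\vert u\vert^2}h+(K_uh\vert u)$, then absorbing the resulting rank-one correction into $H_u^2-(\cdot\vert u)u=K_u^2$; the Lax pair for $K_u$ then follows by the same antilinearity step as for $H_u$. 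You instead feed the already-proved $H_u$ Lax pair into $\dot K_u=\dot H_uS$ and verify the discrepancy $\dot K_u-[C_u,K_u]$ vanishes directly, by showing that the two shift commutators $H_u^2S-SK_u^2$ and $T_{\vert u\vert^2}S-ST_{\vert u\vert^2}$ coincide as rank-one operators and that the leftover term cancels against $(B_u-C_u)K_u$ via (\ref{K2H}). The underlying computations are the same; the paper's organization has the advantage of producing the clean structural identity above, while yours makes the mechanism of cancellation between the three rank-one defects completely explicit.
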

\begin{proof} Using equation (\ref{szego}) and identity (\ref{Hu3}), 
$$\frac{dH_u}{dt}=H_{-i\Pi (\vert u\vert ^2u)}=-i H_{\Pi (\vert u\vert ^2u)}=-i(T_{\vert u\vert ^2}H_u+H_uT_{\vert u\vert ^2}-H_u^3)\ .$$
Using the antilinearity of $H_u$, this leads to the first identity. For the second one, we observe that
\begin{equation}\label{KPi}
K_{\Pi (\vert u\vert ^2u)}=H_{\Pi (\vert u\vert ^2u)}S=T_{\vert u\vert ^2}H_uS+H_uT_{\vert u\vert ^2}S-H_u^3S\ .
\end{equation}
Moreover, notice that
$$T_b(Sh)=ST_b(h)+(bSh\vert 1)\ .$$
In the case $b=\vert u\vert ^2$, this gives
$$T_{\vert u\vert ^2}Sh =ST_{\vert u\vert ^2}h+(\vert u\vert ^2Sh\vert 1)\ .$$
Moreover,
$$(\vert u\vert ^2Sh\vert 1)=(u\vert u\overline {Sh})= (u\vert K_u(h))\ .$$
Consequently,
$$H_uT_{\vert u\vert ^2}Sh=K_uT_{\vert u\vert ^2}h+(K_u(h)\vert u)u\ .$$
Coming back to (\ref{KPi}), we obtain
$$K_{\Pi (\vert u\vert ^2u)}=T_{\vert u\vert ^2}K_u+K_uT_{\vert u\vert ^2}-(H_u^2-(\cdot \vert u)u) K_u\ .$$
Using identity (\ref{K2H}), this leads to
\begin{equation}\label{Ku3}
K_{\Pi (\vert u\vert ^2u)}=T_{\vert u\vert ^2}K_u+K_uT_{\vert u\vert ^2} - K_u^3\ .
\end{equation}
The second identity is therefore a consequence of antilinearity and of 
$$\frac{dK_u}{dt}=-iK_{\Pi (\vert u\vert ^2u)}\ .$$
\end{proof}
Observing that $B_u, C_u$ are linear and antiselfadjoint, we obtain, following a classical argument due to Lax \cite{L},
\begin{corollary}\label{UV}
Under the conditions of Theorem \ref{Lax pair}, define $U=U(t)$, $V=V(t)$ the solutions of the following linear ODEs
on $\mathcal L(L^2_+)$,
$$\frac {dU}{dt}=B_uU\ ,\ \frac{dV}{dt}=C_uV\ ,\ U(0)=V(0)=I\ .$$
Then $U(t), V(t)$ are unitary operators and
$$H_{u(t)}=U(t)H_{u(0)} U(t)^*\ ,\ K_{u(t)}=V(t)K_{u(0)} V(t)^*\ .$$
\end{corollary}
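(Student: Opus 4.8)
The plan is to run the classical Lax argument of \cite{L}, paying attention to the two features specific to our setting: the $\C$--antilinearity of $H_u$ and $K_u$, and the fact that we work on the infinite--dimensional space $L^2_+$, so that isometry does not automatically yield unitarity. The scheme is (i) to check that $U,V$ are well defined and invertible, (ii) to upgrade invertibility plus the antiselfadjointness of the generators to unitarity, and (iii) to show that conjugating $H_u$ by $U$ (resp. $K_u$ by $V$) produces a time--independent operator by differentiating and invoking the Lax equations of Theorem \ref{Lax pair}.

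First I would verify that $U$ and $V$ make sense. Since $u\in C^\infty(\R,H^s_+)$ with $s>\frac12$, Sobolev embedding gives $\vert u\vert^2\in L^\infty(\T)$, so $T_{\vert u\vert^2}$ is bounded; together with the boundedness of $H_u^2$ and $K_u^2$ (in fact these are trace class), the generators $B_u=B_{u(t)}$ and $C_u=C_{u(t)}$ are bounded operators depending continuously on $t$. Hence the linear ODEs $\dot U=B_uU$ and $\dot V=C_uV$ have unique solutions in $\mathcal L(L^2_+)$. These solutions are invertible: letting $\tilde U$ solve the backward equation $\dot{\tilde U}=-\tilde U B_u$ with $\tilde U(0)=I$, one checks $\frac{d}{dt}(\tilde U U)=-\tilde U B_u U+\tilde U B_u U=0$, so $\tilde U U=I$, and symmetrically $U\tilde U=I$, whence $U^{-1}=\tilde U$.

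Next I would establish unitarity. Using that $B_u$ is $\C$--linear and antiselfadjoint, $B_u^*=-B_u$, I compute $\frac{d}{dt}(U^*U)=\dot U^*U+U^*\dot U=U^*B_u^*U+U^*B_uU=0$, so $U^*U=U(0)^*U(0)=I$ and $U$ is an isometry. Combining this with the invertibility just established gives $U^*=U^{-1}$, hence $UU^*=I$ and $U$ is unitary; the identical reasoning applies to $V$ via $C_u^*=-C_u$. For the intertwining relations, set $A(t):=U(t)^*H_{u(t)}U(t)$, which is again $\C$--antilinear because $U$ is $\C$--linear. Differentiating by the product rule and inserting $\dot U^*=-U^*B_u$, the Lax equation $\dot H_u=[B_u,H_u]=B_uH_u-H_uB_u$, and $\dot U=B_uU$, the four terms cancel in pairs: $\frac{dA}{dt}=U^*\bigl(-B_uH_u+(B_uH_u-H_uB_u)+H_uB_u\bigr)U=0$. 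Thus $A(t)\equiv A(0)=H_{u(0)}$, and multiplying on the left by $U$ and on the right by $U^*$ (using $UU^*=I$) yields $H_{u(t)}=U(t)H_{u(0)}U(t)^*$. Replacing $(H_u,B_u,U)$ by $(K_u,C_u,V)$ and invoking the second Lax equation gives $K_{u(t)}=V(t)K_{u(0)}V(t)^*$.

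The computation itself is short; the only genuine points to watch are that the adjoint and product--rule manipulations are legitimate because $U,V,B_u,C_u$ are bounded and $\C$--linear, so that ordinary Hilbert--space adjoints apply even though $H_u,K_u$ are antilinear, and that the step from isometry to unitarity truly relies on the invertibility of the propagator rather than being automatic in infinite dimensions. I expect this invertibility/unitarity upgrade to be the main, if modest, obstacle, the rest being a direct verification.
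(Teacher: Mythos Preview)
Your proposal is correct and is precisely the classical Lax argument the paper invokes without spelling out (``following a classical argument due to Lax \cite{L}''). You have additionally made explicit the well--posedness of the propagator ODEs, the isometry--to--unitarity upgrade via invertibility in infinite dimensions, and the compatibility of the product rule with the $\C$--antilinearity of $H_u$ and $K_u$ --- all points the paper leaves implicit.
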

\section{Proof of the formula}\label{preuve}
In this section, we prove Theorem \ref{explicit}. Our starting point is the following identity, valid for every $v\in L^2_+$,
\begin{equation}\label{Taylor}
\underline v(z)=((I-zS^*)^{-1}v\vert 1)\ ,\ z\in D\ .
\end{equation}
Indeed, the Taylor coefficient of order $n$ of the right hand side at $z=0$ is 
$$((S^*)^nv\vert 1)=(v\vert S^n1)=\hat v(n)\ ,$$
which coincides with the Taylor coefficient of order $n$ of the left hand side. Let $u\in C^\infty (\R ,H^s_+)$ be a solution of (\ref{szego}), $s>\frac 12$.  Applying (\ref{Taylor}) to $v=u(t)$ and using the unitarity of $U(t)$, we get
$$\underline u(t,z)=((I-zS^*)^{-1}u(t)\vert 1)=(U(t)^*(I-zS^*)^{-1}u(t)\vert U(t)^*1)\ ,$$
which yields
\begin{equation}\label{ubar}
\underline u(t,z)=((I-zU(t)^*S^*U(t))^{-1}U(t)^*u(t)\vert U(t)^*1)\ .
\end{equation}
We shall identify successively $U(t)^*1, U(t)^*u(t),$ and the restriction of  $U(t)^*S^*U(t)$ on the  range of $H_{u_0}$. We begin with $U(t)^*1$,
$$\frac{d}{dt}U(t)^*1=-U(t)^*B_u(1)\ ,$$
and
$$B_u(1)=\frac i2 H_u^2(1)-iT_{\vert u\vert ^2}(1)=-\frac i2H_u^2(1)\ .$$
Hence
$$\frac{d}{dt}U(t)^*1=\frac i2U(t)^*H_u^2(1)=\frac i2H_{u_0}^2U(t)^*1\ ,$$
where we have used corollary \ref{UV}. This yields
\begin{equation}\label{U*1}
U(t)^*1={\rm e}^{i\frac t2 H_{u_0}^2}(1)\ .
\end{equation}
Consequently,
$$U(t)^*u(t)=U(t)^*H_{u(t)}(1)=H_{u_0}U(t)^*(1)=H_{u_0}{\rm e}^{i\frac t2 H_{u_0}^2}(1)\ ,$$
and therefore
\begin{equation}\label{U*u}
U(t)^*u(t)={\rm e}^{-i\frac t2 H_{u_0}^2}(u_0)\ .
\end{equation}
Finally,
$$U(t)^*S^*U(t)H_{u_0}= U(t)^*S^*H_{u(t)}U(t)=U(t)^*K_{u(t)}U(t)\ ,$$
and therefore
\begin{equation}\label{U*S*UH}
U(t)^*S^*U(t)H_{u_0}=U(t)^*V(t)K_{u_0}V(t)^*U(t)\ .
\end{equation}
On the other hand,
\begin{eqnarray*}
\frac d{dt}U(t)^*V(t)&=& -U(t)^*B_{u(t)} V(t)+U(t)^*C_{u(t)}V(t)=U(t)^*(C_{u(t)}-B_{u(t)})V(t)\\
&=&\frac i2 U(t)^*(K_{u(t)}^2-H_{u(t)}^2)V(t) =\frac i2 (U(t)^*V(t)K_{u_0}^2-H_{u_0}^2U(t)^*V(t))\ .
\end{eqnarray*}
We infer
$$U(t)^*V(t)={\rm e}^{-i\frac t2 H_{u_0}^2}{\rm e}^{i\frac t2 K_{u_0}^2}\ .$$
Plugging this identity into (\ref{U*S*UH}), we obtain
\begin{eqnarray*}
U(t)^*S^*U(t)H_{u_0}&=&{\rm e}^{-i\frac t2 H_{u_0}^2}{\rm e}^{i\frac t2 K_{u_0}^2}K_{u_0} {\rm e}^{-i\frac t2 K_{u_0}^2}{\rm e}^{i\frac t2 H_{u_0}^2}\\
&=& {\rm e}^{-i\frac t2 H_{u_0}^2}{\rm e}^{i t K_{u_0}^2}K_{u_0} {\rm e}^{i\frac t2 H_{u_0}^2}\\
&=&{\rm e}^{-i\frac t2 H_{u_0}^2}{\rm e}^{i t K_{u_0}^2}S^*H_{u_0} {\rm e}^{i\frac t2 H_{u_0}^2}\\
&=&{\rm e}^{-i\frac t2 H_{u_0}^2}{\rm e}^{i t K_{u_0}^2}S^*{\rm e}^{-i\frac t2 H_{u_0}^2}H_{u_0} \ .
\end{eqnarray*}
We conclude that, on the range of $H_{u_0}$, 
\begin{equation}\label{U*S*U}
U(t)^*S^*U(t)={\rm e}^{-i\frac t2 H_{u_0}^2}{\rm e}^{i t K_{u_0}^2}S^*{\rm e}^{-i\frac t2 H_{u_0}^2}\ .
\end{equation}
It remains to plug identities (\ref{U*1}), (\ref{U*u}), (\ref{U*S*U}) into (\ref{ubar}). We finally obtain
\begin{eqnarray*}
\underline u(t,z)&=&((I-z{\rm e}^{-i\frac t2 H_{u_0}^2}{\rm e}^{i t K_{u_0}^2}S^*{\rm e}^{-i\frac t2 H_{u_0}^2})^{-1}{\rm e}^{-i\frac t2 H_{u_0}^2}(u_0)\vert 
{\rm e}^{i\frac t2 H_{u_0}^2}(1))\\
&=& ((I-z{\rm e}^{-i t H_{u_0}^2}{\rm e}^{i t K_{u_0}^2}S^*)^{-1}{\rm e}^{-i t H_{u_0}^2}(u_0)\vert 1)\ ,
\end{eqnarray*}
which is the claimed formula in the case of data $u_0\in H^s_+, s>\frac 12$. The case $u_0\in H^{\frac 12}_+$ follows by a simple approximation argument. Indeed, we know from \cite{GG}, Theorem 2.1, that, for every $t\in \R $, the mapping $u_0\mapsto u(t)$ is continuous on $H^{\frac 12}_+$. On the other hand, the maps $u_0\mapsto H_{u_0}, K_{u_0}$ are continuous from $H^{\frac 12}_+$ into $\mathcal L(L^2_+)$. Since 
$H_{u_0}^2, K_{u_0}^2$ are selfadjoint, the operator 
$${\rm e}^{-i t H_{u_0}^2}{\rm e}^{i t K_{u_0}^2}S^*$$
has norm at most $1$. Hence, for $z\in D$, the right hand side of the formula is continuous from $H^{\frac 12}_+$ into $\C $.
\section{An example}\label{instab}
This section is devoted to revisiting sections 6.1, 6.2 of \cite{GG} by means of the explicit formula. Given $\e \in \R $, we define
$$u_0^\e (x)={\rm e}^{ix}+\e \ .$$
It is easy to check that $u_0^\e \in \mathcal V(3)$, hence the corresponding solution $u^\e $ of (\ref{szego}) is valued in 
$\mathcal V(3)$, and consequently reads
$$u^\e (t,x)=\frac{a^\e (t){\rm e}^{ix}+b^\e (t)}{1-p^\e (t){\rm e}^{ix}}\ ,$$
with $a^\e (t)\in \C^*, b^\e (t) \in \C , p^\e (t)\in D, a^\e (t)+b^\e (t)p^\e (t)\ne 0. $ We are going to calculate these functions explicitly.
We start with the special case $\e =0$. In this case, $\vert u_0^0 \vert =1$, hence 
$$u^0(t,x)={\rm e}^{-it}u^0 _0(x)$$
so 
$$a^0(t)={\rm e}^{-it}\ ,\ b^0(t)=0\ ,\ p^0(t)= 0\ .$$
We come to $\e \ne 0$. The operators $H_{u_0}^2, K_{u_0}^2, S^*$ act on the range of $H_{u_0^\e}$,
which  is the two dimensional vector space spanned by $1, {\rm e}^{ix}$. In this basis, the matrices of these three operators
are respectively
\begin{eqnarray*}
\mathcal M(H_{u_0}^2)= \left (\begin{array}{cc}  1+\e ^2 &\e \\ \e &1\end{array} \right ) \ ,\    \mathcal M(K_{u_0}^2)= \left (\begin{array}{cc}  1 &0\\ 0&0\end{array} \right )\ ,\ \mathcal M(S^*)= \left (\begin{array}{cc}  0 &1 \\ 0 &0\end{array} \right )\ .
\end{eqnarray*}
The eigenvalues of $H_{u_0}^2$ are
$$\rho _\pm ^2=1+\frac{\e ^2}2 \pm \e \sqrt{1+\frac{\e ^2}4}\ ,$$
hence the matrix of the exponential is given by
\begin{eqnarray*}\mathcal M\left ({\rm e}^{-it H_{u_0}^2}\right )&=&\frac{{\rm e}^{-it\rho _+^2}-{\rm e}^{-it\rho _-^2}}{\rho _+^2-\rho _-^2}\, \mathcal M(H_{u_0}^2)+\frac{\rho _-^2{\rm e}^{-it\rho _+^2}-
\rho _+^2{\rm e}^{-it\rho _-^2}}{\rho _-^2-\rho _+^2}\, I\ \\
&=& \frac{{\rm e}^{-i\Omega t}}{2\omega}\left ( -2i\sin \left (\omega t\right )\mathcal M(H_{u_0}^2)+(2\omega \cos (\omega t)+2i\Omega \sin (\omega t))I    \right )\\
\text{where }\ \omega &:=&\e \sqrt{1+\frac {\e ^2}4}\ ,\ \Omega :=1+\frac{\e ^2}2.
\end{eqnarray*}
We obtain
\begin{eqnarray*}
&&{\rm e}^{-it H_{u_0}^2}(u_0)= \frac{{\rm e}^{-i\Omega t}}{2\omega}\left (-2i\e \Omega \sin (\omega t)+2\e \omega \cos (\omega t)+(2\omega \cos (\omega t)-i\e ^2\sin (\omega t)){\rm e}^{ix}\right )\ ,\\
&&\mathcal M\left ( {\rm e}^{-i t H_{u_0}^2}{\rm e}^{i t K_{u_0}^2}S^* \right )=\frac{{\rm e}^{-i t\frac {\e ^2}{2}}}{2\omega}\left (\begin{array} {cc}  
0&  2\omega \cos (\omega t)-i\e ^2\sin (\omega t)   \\ 0& -2i\e \sin(\omega t)
\end{array}\right )\ ,
\end{eqnarray*}
and finally
\begin{eqnarray*}
a^\e (t)&=&{\rm e}^{-it(1+\e ^2)}\ ,\ b^\e (t)={\rm e}^{-it(1+\e ^2/2)}\left (\e \cos (\omega  t)
-i\frac{2+\e ^2}{\sqrt{4+\e ^2}}\sin (\omega  t)\right )\\
p^\e (t)&=&-\frac{2i}{\sqrt{4+\e ^2}}\sin (\omega  t)\, {\rm e}^{-it\e ^2/2}\ ,\ \omega  :=\frac \e 2\sqrt{4+\e ^2}\ .
\end{eqnarray*}
The important feature of such dynamics concerns the regime $\e \to 0$. Though $p^0(t)\equiv 0$, $p^\e (t)$ may visit small neighborhoods of the unit circle at large times.
\begin{figure}
\includegraphics[scale=0.25]{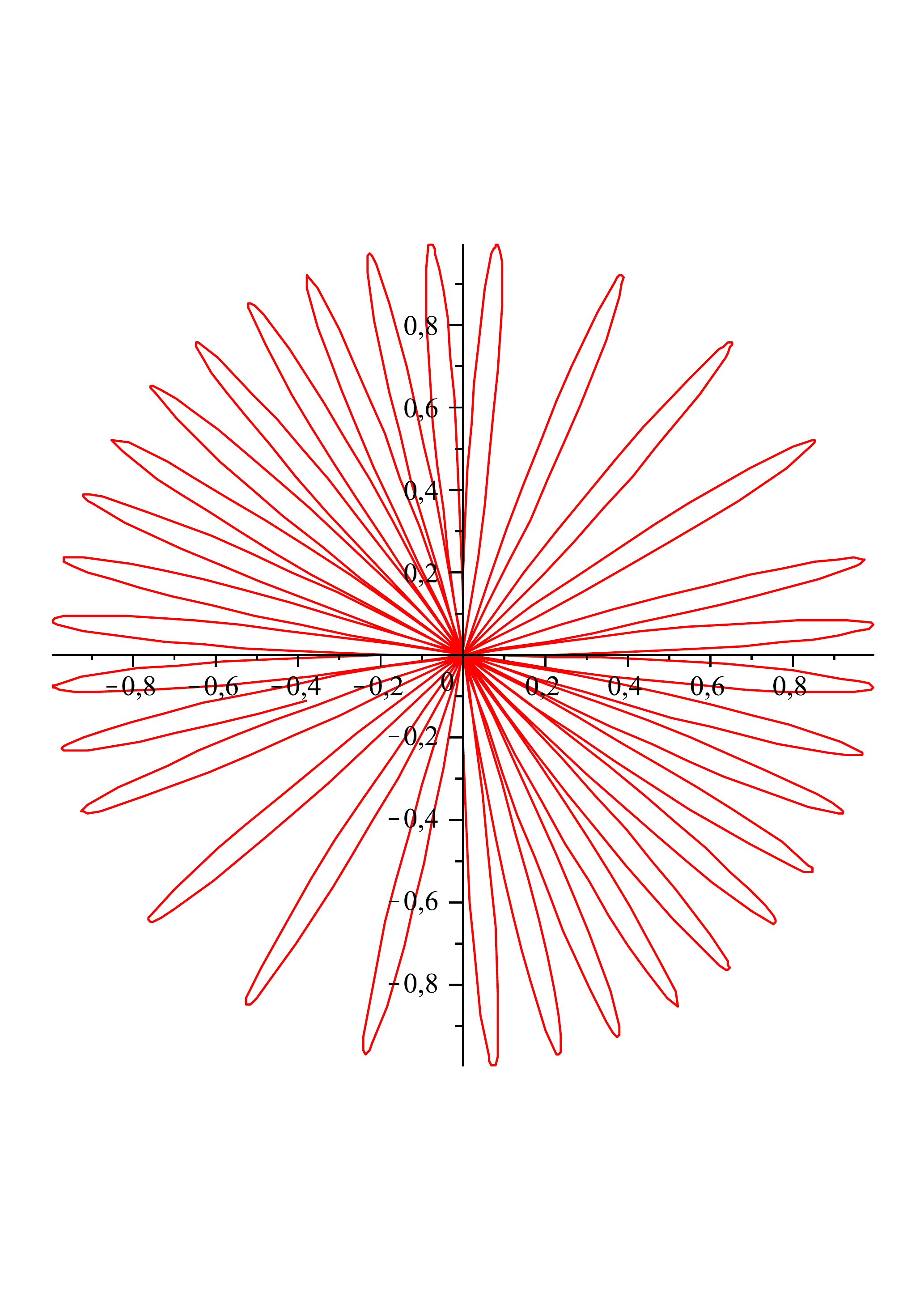}
\caption{The trajectory of $p^\e $ for  small $\e $.}
\end{figure}
Specifically, at time $t^\e =\pi /(2\omega )\sim \pi /(2\e )$, we have $\vert p^\e (t)\vert \sim 1-\e ^2$. A consequence is that the momentum density,
\begin{eqnarray*}
\mu _n(t^\e )&:=&n\vert \hat u^\e (t^\e ,n)\vert ^2=n\vert a^\e (t^\e )+b^\e (t^\e )p^\e (t^\e )\vert ^2\vert p^\e (t^\e )\vert ^{2(n-1)}\\
&=&n\frac{\e ^4}{(4+\e ^2)^2} \left (1-\frac {\e ^2}{4+\e ^2}\right )^{n-1},
\end{eqnarray*}
which satisfies 
$$\sum _{n=1}^\infty \mu _n(t^\e )= Tr(K_{u^\e (t^\e )}^2)= Tr(K_{u_0^\e }^2)=1\ ,$$
becomes concentrated at high frequencies
$$n\simeq \frac 1{\e ^2}\ . $$
This induces the following instability of $H^s$ norms
$$\Vert u^\e (t^\e )\Vert _{H^s}\simeq \frac 1{\e ^{2s-1}}\ ,\ s>\frac 12\ ,$$
a phenomenon of the same nature as the one displayed by Colliander, Keel, Staffilani, Takaoka and Tao in \cite{CKSTT}. This proves in particular that conservation laws do not control $H^s$ regularity for $s>\frac 12$. Notice that, as already mentioned at the end of subsection \ref{finite}
of the introduction, the family $(u_0^\e )$ approaches $u_0^0$, which is a non generic element of $\mathcal V(3)$, since $H_{u_0}^2$ admits $1$ as  a double eigenvalue.
\s
This example naturally leads  to the question of large time behavior of the $H^s$ norm of individual solutions for $s>\frac 12$. 
We are going to answer this question in the special case of finite rank solutions by proving the quasi periodicity theorem in the next two sections.
\section{Generalization to the Szeg\H{o} hierarchy}
The Szeg\H{o} hierarchy was introduced in \cite{GG} and used in \cite{GG2}. For the convenience of the reader, and because our notation is 
slightly different, we shall recall the main facts here. For $y>0$ and $u\in H^{\frac 12}_+$, we set
$$J^y(u)= ((I+yH_u^2)^{-1}(1)\vert 1)\ .$$
Notice that the connection with the Szeg\H{o} equation is made by
$$E(u)=\frac 14(\pa _y^2J^y_{\vert y=0}-(\pa _yJ^y_{\vert y=0})^2)\ .$$
For every $s>\frac 12$, $J^y $ is a smooth real valued function on $H^s_+$, and its Hamiltonian vector field  is given by
$$X_{J^y}(u)=2iy w^yH_uw^y\ ,\ w^y:=(I+yH_u^2)^{-1}(1)\ ,$$
which is a Lipschitz vector field on bounded subsets of $H^s_+$. This fact is a consequence of the following lemma, where we collect basic estimates. We recall that the Wiener algebra $W$ is  the space  of $f\in L^2_+$ such that
$$\Vert f\Vert _{W}:=\sum _{k=0}^\infty \vert \hat f(k)\vert <\infty \ .$$
\begin{lemma}\label{Lipschitz}
Let $f,u,v\in L^2_+$.
\begin{eqnarray*}
\Vert H_uf\Vert _{W}&\le &\Vert u\Vert _W\Vert f\Vert _W\ ,\\
\Vert H_uf\Vert _{H^{s-\frac 12}}&\le &\Vert u\Vert _{H^s}\Vert f\Vert _{L^2}\ ,\ s\ge \frac 12\ ,\\
\Vert H_uf\Vert _{H^s}&\le & \Vert u\Vert _{H^s}\Vert f\Vert _{W}\ ,\  s\ge 0\ ,\\
\Vert w^y \Vert _{H^s}&\le &(1+y\Vert u\Vert _{H^s}^2)\ ,\ s>1\ ,\\
\Vert fg\Vert _{H^s}&\le &C_s(\Vert f\Vert _W\Vert g\Vert _{H^s}+\Vert g\Vert _W\Vert f\Vert _{H^s})\ ,\\
\Vert X_{J^y}(u)-X_{J^y}(v)\Vert _{H^s}&\le &C_s(R,y)\Vert u-v\Vert _{H^s}\ ,\ s>1\ ,\Vert u\Vert _{H^s}+\Vert v\Vert _{H^s}\le R\ .
\end{eqnarray*}
\end{lemma}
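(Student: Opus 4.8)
The plan is to prove the six inequalities in the order listed, since each feeds into the next; the genuinely delicate point is the half--derivative smoothing estimate, the second one. The three Hankel bounds all start from the Fourier representation $\widehat{H_uf}(n)=\sum_{p\ge 0}\hat u(n+p)\overline{\hat f(p)}$, together with the structural remark that the symbol is always sampled at the high frequency $n+p\ge\max(n,p)$. The first and third inequalities are then immediate. For the Wiener bound I would take moduli in the Fourier formula, sum over $n\ge 0$, exchange the order of summation, and use $\sum_{n\ge 0}|\hat u(n+p)|=\sum_{m\ge p}|\hat u(m)|\le\Vert u\Vert_W$, which gives the constant $1$. For the third inequality I would multiply by $(1+n^2)^{s/2}$, bound it by $(1+(n+p)^2)^{s/2}$ using $0\le n\le n+p$ and $s\ge 0$, and apply Minkowski's inequality in $\ell^2_n$ to the sum over $p$; each translate of the sequence $m\mapsto(1+m^2)^{s/2}\hat u(m)$ has $\ell^2$ norm at most $\Vert u\Vert_{H^s}$, which produces the factor $\Vert f\Vert_W=\sum_p|\hat f(p)|$.

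The main obstacle is the second inequality, where one gains half a derivative while paying only the $L^2$ norm of $f$. This is a borderline estimate: a crude Cauchy--Schwarz in the Fourier formula throws up the logarithmically divergent factor $\sum_n(1+n^2)^{s-\frac12}(1+(n+p)^2)^{-s}$, so the single application of Cauchy--Schwarz is too lossy. The cleanest way around this is duality. From the projector identity one checks $(H_uf\vert h)=(u\vert fh)$ for $h\in L^2_+$, whence $|(H_uf\vert h)|\le\Vert u\Vert_{H^s}\Vert fh\Vert_{H^{-s}}$; taking the supremum over $\Vert h\Vert_{H^{\frac12-s}}\le 1$ reduces the claim to the Sobolev product estimate $\Vert fh\Vert_{H^{-s}}\le C\Vert f\Vert_{L^2}\Vert h\Vert_{H^{\frac12-s}}$. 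This is exactly the endpoint case $s_1+s_2-s_0=\frac12$ of the multiplication law, with $s_1=0$, $s_2=\frac12-s$, $s_0=-s$ and both strict inequalities $s_0<s_1$, $s_0<s_2$ satisfied, which is admissible and is proved by a Littlewood--Paley (paraproduct) decomposition exploiting orthogonality. This is the step I expect to require the most care.

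The fifth inequality is the standard tame product estimate: from $\widehat{fg}(n)=\sum_k\hat f(k)\hat g(n-k)$ and the elementary bound $(1+n^2)^{\frac s2}\le C_s((1+k^2)^{\frac s2}+(1+(n-k)^2)^{\frac s2})$ I would split into two convolutions and apply Young's inequality $\ell^1*\ell^2\hookrightarrow\ell^2$. For the fourth inequality I would use the resolvent identity $w^y=1-yH_u^2w^y$ together with the functional calculus bound $\Vert w^y\Vert_{L^2}\le 1$; then the third inequality gives $\Vert H_u^2w^y\Vert_{H^s}\le\Vert u\Vert_{H^s}\Vert H_uw^y\Vert_W$, and $\Vert H_uw^y\Vert_W\le C\Vert H_uw^y\Vert_{H^{s-\frac12}}\le C\Vert u\Vert_{H^s}\Vert w^y\Vert_{L^2}$ by the embedding $H^{s-\frac12}\hookrightarrow W$ and the second inequality. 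The embedding is valid precisely when $s-\frac12>\frac12$, which is where the hypothesis $s>1$ comes from, and this closes the bound on $\Vert w^y\Vert_{H^s}$.

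Finally, for the Lipschitz estimate I would telescope $X_{J^y}(u)-X_{J^y}(v)=2iy(w^y_uH_u(w^y_u)-w^y_vH_v(w^y_v))$ into three terms: two carrying the factor $w^y_u-w^y_v$ and one carrying the Hankel difference. The crucial simplification is the exact linearity $H_u-H_v=H_{u-v}$, so that the Hankel difference is handled by the third inequality, while the products are controlled by the algebra estimate. The remaining factor $w^y_u-w^y_v$ I would treat by the resolvent identity $w^y_u-w^y_v=-y(I+yH_u^2)^{-1}(H_u^2-H_v^2)w^y_v$ with $H_u^2-H_v^2=H_uH_{u-v}+H_{u-v}H_v$, bootstrapping its $H^s$ norm exactly as in the fourth inequality; all constants then depend only on $R$ and $y$. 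This last step is the most laborious to write out but, given the preceding estimates, it is entirely routine.
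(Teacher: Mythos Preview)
Your overall plan matches the paper's proof almost exactly: the first, third, fourth, fifth and sixth inequalities are handled just as the authors do (Fourier formula for $H_u$, the identity $w^y=1-yH_u^2w^y$ with $\Vert w^y\Vert_{L^2}\le 1$, the high/low splitting for the tame product estimate, and the resolvent identity $w^y_u-w^y_v=-y(I+yH_u^2)^{-1}(H_u^2-H_v^2)w^y_v$ followed by a bootstrap).

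There is, however, a genuine gap in your treatment of the second inequality. Your duality reduction leads to the product estimate $\Vert fh\Vert_{H^{-s}}\le C\Vert f\Vert_{L^2}\Vert h\Vert_{H^{1/2-s}}$ with $s_1=0$, $s_2=\tfrac12-s$, $s_0=-s$. For $s>\tfrac12$ one has $s_1+s_2=\tfrac12-s<0$, and the standard Sobolev multiplication theorem on $\T$ \emph{fails} in this range: the product of a general $L^2$ function with a general $H^{1/2-s}$ distribution is not even well defined, and no Littlewood--Paley decomposition rescues it. (The inequality you want does happen to hold when $f,h\in L^2_+$, but that is exactly equivalent, via $(H_uf\vert h)=(H_uh\vert f)$, to the second inequality itself---so this route is circular.) The ``crude'' Cauchy--Schwarz you discarded is the wrong one; the right elementary argument is to apply Cauchy--Schwarz in $\ell$ \emph{without} inserting weights,
\[
\bigl|\widehat{H_uf}(k)\bigr|^2\le \Vert f\Vert_{L^2}^2\sum_{\ell\ge 0}|\hat u(k+\ell)|^2,
\]
multiply by $(1+k^2)^{s-1/2}$, sum over $k$, swap the order of summation, and use $\sum_{k=0}^m(1+k^2)^{s-1/2}\le C_s(1+m^2)^s$ for $s\ge\tfrac12$. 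This is the ``straightforward'' argument the paper alludes to, and it avoids the endpoint issue entirely. With this correction your proof is complete and essentially identical to the paper's.
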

\begin{proof}
The first three estimates are straightforward consequences of the formula
$$\widehat {H_uf}(k)=\sum _{\ell =0}^\infty \hat u(k+\ell )\overline {\hat f(\ell )}
\ .$$
The fourth estimate comes from these estimates and the fact that
$$w^y=1-yH_u^2w^y\ ,\ \Vert w^y\Vert _{L^2}\le 1\ .$$
The fifth estimate is obtained by decomposing
$$\widehat{fg}(k)=\sum _{\ell =0}^\infty \hat f(k-\ell )\hat g(\ell )=\sum _{\vert k-\ell \vert \le \ell } \hat f(k-\ell )\hat g(\ell )+
\sum _{\vert k-\ell \vert > \ell } \hat f(k-\ell )\hat g(\ell )\ .$$
As for the last estimate, we set
$$w^y[u]:=(I+yH_u^2)^{-1}(1)\ .$$
We  write
$$\Vert w^y[u]-w^y[v]\Vert _{L^2}=y\Vert (I+yH_u^2)^{-1}(H_v^2-H_u^2)(I+yH_v^2)^{-1}(1)\Vert _{L^2}\le yR\Vert u-v\Vert _{H^s}\ .$$
Then, by using again the first two  inequalities,
$$w^y[u]-w^y[v]=y(H_v^2(w^y[v])-H_u^2(w^y[u]))$$
leads to 
$$\Vert w^y[u]-w^y[v]\Vert _{H^s}\le C(R,y)\Vert u-v\Vert _{H^s}\ .$$
Using moreover the fact that $H^s$ is an algebra, this yields the desired estimate.
\end{proof}

By the Cauchy--Lipschitz theorem, the evolution equation 
\begin{equation}\label{hierarchy}
\dot u=X_{J^y}(u)
\end{equation}
admits local in time solutions for every initial data in $H^s_+$ for $s>1$, and the lifetime is bounded from below if the 
data are bounded in $H^s_+$. We shall see that this evolution equation admits a Lax pair structure similar to the 
one in section \ref{Lax}. 
\begin{theorem}\label{Laxhier}
For every $u\in H^s_+$, we have
\begin{eqnarray*}
H_{iX_{J^y}(u)}&=&H_u F_u^y+F_u^yH_u\ ,\\
K_{iX_{J^y}(u)}&=&K_uG_u^y+G_u^yK_u\ ,\\
 G_u^y(h)&:=&-yw^y\, \Pi (\overline {w^y}\, h)+y^2H_uw^y\, \Pi (\overline {H_uw^y}\, h)\ ,\\
 F_u^y(h)&:=&G_u^y(h)-y^2(h\vert H_uw^y)H_uw^y\ .
\end{eqnarray*}
If $u\in C^\infty (\mathcal I ,H^s_+)$ is a solution of equation (\ref{hierarchy}) on a time interval $\mathcal I$, then
\begin{eqnarray*}
\frac{dH_u}{dt}&=&[B_u^y,H_u]\ ,\ \frac{dK_u}{dt}=[C_u^y,K_u]\ ,\\
B_u^y&=&-iF_u^y\ ,\ C_u^y=-iG_u^y\ .
\end{eqnarray*}
\end{theorem}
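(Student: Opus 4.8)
The plan is to imitate the structure of the proof of Theorem \ref{Lax pair}, replacing the cubic nonlinearity $\Pi(|u|^2u)$ by the vector field $iX_{J^y}(u)=-2y\,w^yH_uw^y$ and computing $H_{iX_{J^y}(u)}$ and $K_{iX_{J^y}(u)}$ explicitly. The key algebraic tool is again Lemma \ref{abc}, applied with a carefully chosen triple $(a,b,c)$. Since $iX_{J^y}(u)=-2y\,w^y\,H_uw^y=-2y\,w^y\,\Pi(u\overline{w^y})$, I would first write this symbol in a form amenable to Lemma \ref{abc}. Set $g:=H_uw^y=\Pi(u\overline{w^y})$, so that $iX_{J^y}(u)=-2y\,w^y g=-2y\,\Pi(w^y\overline{1}\,g)$, recognizing it as $\Pi(a\overline b c)$ with $a=w^y$, $b=1$, $c=g$; but because $g$ itself is $\Pi(u\overline{w^y})$, the computation of $H_{iX_{J^y}(u)}$ will involve nested Hankel and Toeplitz operators that must be patiently reorganized using the shift and self-adjointness identities.

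First I would establish the stated formula for $H_{iX_{J^y}(u)}$ as an identity of operators on $L^2_+$, independent of any evolution; this is pure operator algebra. The route is to expand $H_{iX_{J^y}(u)}(h)=-2y\,\Pi(w^y\,\overline{g}\,\overline h)$ and repeatedly use Lemma \ref{abc} together with the defining relations $w^y=(I+yH_u^2)^{-1}(1)$, equivalently $w^y+yH_u^2w^y=1$, and $g=H_uw^y$. The rank-one correction terms $(\cdot\mid H_uw^y)H_uw^y$ and $(\cdot\mid w^y)w^y$ should emerge from the boundary/commutator terms produced when the projector $\Pi$ is pushed past products, exactly as the term $(\cdot\mid u)u$ appeared in (\ref{K2H}) and in the passage from (\ref{KPi}) to (\ref{Ku3}). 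I expect $F_u^y$ and $G_u^y$ to differ precisely by the rank-one term $y^2(\cdot\mid H_uw^y)H_uw^y$, mirroring the relation (\ref{K2H}) between $H_u^2$ and $K_u^2$; indeed the passage from $H$ to $K$ should again be governed by $K_{iX_{J^y}(u)}=H_{iX_{J^y}(u)}S$ and the commutation $T_bS=ST_b+(b\,S\,\cdot\mid 1)$ used in Theorem \ref{Lax pair}.

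Once the two operator identities $H_{iX_{J^y}(u)}=H_uF_u^y+F_u^yH_u$ and $K_{iX_{J^y}(u)}=K_uG_u^y+G_u^yK_u$ are in hand, the Lax pair assertion is immediate. Along a solution of (\ref{hierarchy}) we have $\dot u=X_{J^y}(u)$, hence by antilinearity of $u\mapsto H_u$,
\begin{eqnarray*}
\frac{dH_u}{dt}=H_{\dot u}=H_{X_{J^y}(u)}=-H_{iX_{J^y}(u)}\cdot(-i)\ ,
\end{eqnarray*}
and using $H_{iv}=-iH_v$ together with the operator identity gives $\frac{dH_u}{dt}=[-iF_u^y,H_u]$, since $F_u^y$ is (complex-linear and) self-adjoint so that $B_u^y=-iF_u^y$ is antiselfadjoint, and $H_uF_u^y+F_u^yH_u=-i^{-1}(B_u^yH_u-H_uB_u^y)$ up to the sign bookkeeping; the same computation with $K$ and $G_u^y$ yields the second bracket. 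The main obstacle will be the first step: correctly tracking the signs, the factors of $y$ and $y^2$, and above all the rank-one boundary terms through the repeated applications of Lemma \ref{abc}, so that the self-adjoint operators $F_u^y,G_u^y$ come out in exactly the asserted closed form. I would double-check the outcome by verifying antiselfadjointness of $B_u^y,C_u^y$ and by testing the identities on the simplest rank-one symbols, and by confirming that at $y\to 0$ the leading behaviour is consistent with the Hamiltonian structure recorded via $E(u)$.
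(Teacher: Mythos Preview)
Your overall architecture matches the paper's: first establish the operator identity $H_{iX_{J^y}(u)}=H_uF_u^y+F_u^yH_u$ as pure algebra, then derive the $K$-identity from it via $K=HS$ and the shift commutation $\Pi(\overline v\,Sh)=S\Pi(\overline v h)+(Sh\vert v)$, and finally obtain the Lax pairs from the antilinearity of $H_u$, $K_u$. The second and third steps are exactly what the paper does.

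The point where your plan diverges, and where it is currently too vague to constitute a proof, is the first step. Applying Lemma~\ref{abc} with $a=w^y$, $b=1$, $c=g=H_uw^y$ yields
\[
H_{w^yg}=T_{w^y}H_g+H_{w^y}T_{\overline g}-(g\vert\,\cdot\,)\,w^y,
\]
which involves $H_g$ and $H_{w^y}$, neither of which is $H_u$; so this choice does not bring $H_u$ into the decomposition, and ``repeated applications of Lemma~\ref{abc}'' do not obviously close up. The paper instead proves a separate identity tailored to symbols of the form $aH_u(a)$,
\[
H_{aH_u(a)}(h)=H_u(a)\,H_a(h)+H_u\bigl(a\,\Pi(\overline a h)-(h\vert a)a\bigr),
\]
uses the resolvent relation $w^y=1-yH_u^2w^y$ to write $w^yH_uw^y=H_uw^y-y\,(H_uw^y)\,H_u(H_uw^y)$, and applies the new identity with $a=H_uw^y$. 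This produces a representation $H_{w^yH_uw^y}=L_u^yH_u+H_uR_u^y$ with $L_u^y,R_u^y$ self-adjoint; then the self-adjointness of $H_{w^yH_uw^y}$ allows the symmetrization
\[
H_{w^yH_uw^y}=\tfrac12(L_u^y+R_u^y)H_u+H_u\tfrac12(L_u^y+R_u^y),
\]
so that $F_u^y=-y(L_u^y+R_u^y)$ in the stated closed form. This symmetrization trick is essential and is missing from your outline.

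Two small corrections to your last paragraph: the map $u\mapsto H_u$ is $\C$-linear (it is the operator $h\mapsto H_u(h)$ that is antilinear), so $H_{iv}=iH_v$, not $-iH_v$. The bracket identity then follows because, for antilinear $H_u$ and linear self-adjoint $F$, one has $-i(H_uF+FH_u)=(-iF)H_u-H_u(-iF)=[B_u^y,H_u]$.
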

\begin{proof}
\begin{lemma}\label{luminy}
We have the following identity,
$$H_{aH_u(a)}(h)=H_u(a)H_a(h)+H_u(a\Pi (\overline ah)-(h|a)a)\ .$$
\end{lemma}
\begin{proof}
$$H_{aH_u(a)}(h)=\Pi(aH_u(a)\overline h)=H_u(a)H_a(h)+\Pi (H_u(a)(I-\Pi)(a\overline h))\ .$$
On the other hand,
$$(1-\Pi)(a\overline h)=\overline{\Pi (\overline ah)}-(a|h)\ .$$
The lemma follows by plugging the latter formula into the former
one.
\end{proof}
Let us complete the proof. Using the identity
$$w^y=1-yH_u^2w^y,$$
and Lemma \ref{luminy} with $a=H_u(w^y)$, we get
\begin{eqnarray*}
&&H_{w^y\, H_u(w^y)}(h)=H_{H_u(w^y)}(h)-yH_{H_u(w^y)H_u^2(w^y)}(h)\\
&=&H_{H_u(w^y)}(h)-yH_u^2(w^y)H_{H_u(w^y)}(h)
- yH_u\left (H_u(w^y)\Pi(\overline {H_u(w^y)}h)-(h|H_u(w^y))H_u(w^y)\right )\\
&=&w^y\, H_{H_u(w^y)}(h)-yH_u\left (H_u(w^y)\Pi(\overline {H_u(w^y)}h)-(h|H_u(w^y))H_u(w^y)\right )\\
&=&w^y\, \Pi(\overline {w^y}\, H_uh)-yH_u\left (H_u(w^y)\Pi(\overline
{H_u(w^y)}h)-(h|H_u(w^y))H_u(w^y)\right )\ .
\end{eqnarray*}
We therefore have obtained
$$H_{w^y\, H_u(w^y)}=L_u^yH_u+H_uR_u^y$$
where $L_u^y$ and $R_u^y$ are the following self adjoint operators,
$$L_u^y(h)=w^y\, \Pi(\overline {w^y}\, h)\ ,\ R_u^y(h)=-y\left (H_u(w^y)\Pi(\overline {H_u(w^y)}h)-(h|H_u(w^y))H_u(w^y)\right )\ .$$
Consequently, since $H_{w^y\, H_u(w^y)}$ is self adjoint,
$$H_{w^y\, H_u(w^y)}=\frac 12 (L_u^y+R_u^y)H_u+H_u\frac 12 (L_u^y+R_u^y)\ .$$
Multiplying by $-2y$, we obtain the desired formula, since
$$F_u^y=-y(L_u^y+R_u^y)\ .$$
We now come to the second identity. From the first one, we get
\begin{equation}\label{KX}
K_{iX_{J^y}(u)}=H_{iX_{J^y}(u)}S=H_uF_u^yS+F_u^yK_u\ .
\end{equation}
For every $h, v\in L^2_+$, we use  $$\Pi (\overline vSh)=S\Pi (\overline vh)+(Sh\vert v)$$
and infer
\begin{eqnarray*}
F_u^ySh&=&-yw^y\, \Pi (\overline {w^y}\, Sh)+y^2H_uw^y\, \Pi (\overline {H_uw^y}\, Sh)-y^2(Sh\vert H_uw^y)H_uw^y\\
&=&SG_u^yh-y(Sh\vert w^y)w^y=SG_u^yh+y^2(Sh\vert H_u^2w^y)w^y\\
&=&SG_u^yh+y^2(H_u(w^y)\vert K_u(h))w^y\ ,
\end{eqnarray*}
where we have used  $w^y=1-yH_u^2w^y$ again. Plugging this identity into (\ref{KX}), we obtain the claim.
\s
The last formulae are straightforward consequences of the antilinearity of $H_u$ and $K_u$.
\end{proof}
Using Theorem \ref{Laxhier} in a similar way to section \ref{Lax}, we derive  
\begin{corollary}\label{UVy}
Under the conditions of Theorem \ref{Laxhier}, assuming moreover $0\in \mathcal I$, define $U^y=U^y(t)$, $V^y=V^y(t)$ the solutions of the following linear ODEs
on $\mathcal L(L^2_+)$,
$$\frac {dU^y}{dt}=B_u^y\, U^y\ ,\ \frac{dV^y}{dt}=C_u^y\, V^y\ ,\ U^y(0)=V^y(0)=I\ .$$
Then $U^y(t), V^y(t)$ are unitary operators and
$$H_{u(t)}=U^y(t)H_{u(0)} U^y(t)^*\ ,\ K_{u(t)}=V^y(t)K_{u(0)} V^y(t)^*\ .$$
\end{corollary}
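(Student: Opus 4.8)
The plan is to reproduce, in the setting of the hierarchy, the classical Lax argument already invoked for Corollary \ref{UV}. The essential inputs are the Lax identities $\frac{dH_u}{dt}=[B_u^y,H_u]$ and $\frac{dK_u}{dt}=[C_u^y,K_u]$ furnished by Theorem \ref{Laxhier}, together with the antiselfadjointness of the generators $B_u^y=-iF_u^y$ and $C_u^y=-iG_u^y$.

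First I would check that $F_u^y$ and $G_u^y$ are selfadjoint, so that $B_u^y$ and $C_u^y$ are antiselfadjoint. For $G_u^y$ this is read directly off its explicit expression: each building block $h\mapsto f\,\Pi(\overline f\,h)$ satisfies $(f\,\Pi(\overline f\,h_1)\vert h_2)=(\Pi(\overline f\,h_1)\vert \Pi(\overline f\,h_2))$, which is symmetric in $h_1,h_2$; applying this with $f=w^y$ and $f=H_uw^y$ shows $G_u^y$ is selfadjoint. For $F_u^y$ one uses the decomposition $F_u^y=-y(L_u^y+R_u^y)$ from the proof of Theorem \ref{Laxhier}, where $L_u^y$ and $R_u^y$ were already identified as selfadjoint. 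Consequently $(B_u^y)^*=(-iF_u^y)^*=iF_u^y=-B_u^y$, and likewise $(C_u^y)^*=-C_u^y$.

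Next I would establish unitarity of $U^y(t)$ and $V^y(t)$. Differentiating $U^y(t)^*U^y(t)$ and using $(U^y)'=B_u^yU^y$ together with $(B_u^y)^*=-B_u^y$ gives $\frac{d}{dt}(U^{y*}U^y)=U^{y*}((B_u^y)^*+B_u^y)U^y=0$, so $U^{y*}(t)U^y(t)=I$ for all $t$; since $U^y$ is the flow of a linear ODE on $\mathcal L(L^2_+)$ it is invertible, hence unitary. The same computation applies to $V^y$. Finally I would prove the conjugation formulas: setting $\widetilde H(t):=U^y(t)H_{u(0)}U^y(t)^*$, differentiating and using $(U^{y*})'=-U^{y*}B_u^y$ yields $\frac{d\widetilde H}{dt}=B_u^y\widetilde H-\widetilde H B_u^y=[B_u^y,\widetilde H]$, with $\widetilde H(0)=H_{u(0)}$. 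Thus $\widetilde H$ and $H_{u(t)}$ solve the same linear ODE $\frac{dX}{dt}=[B_u^y,X]$ with the same initial datum, so by uniqueness $H_{u(t)}=U^y(t)H_{u(0)}U^y(t)^*$. The argument for $K$ is identical, replacing $B_u^y,H$ by $C_u^y,K$.

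I expect the only delicate point to be the bookkeeping forced by the antilinearity of $H_u$ and $K_u$: one must check that $X\mapsto[B_u^y,X]$ is a complex-linear flow on the Banach space of antilinear Hilbert--Schmidt operators, so that the uniqueness theorem applies, and that the conjugation $U^y H_{u(0)}U^{y*}$ indeed satisfies the commutator equation despite $H_{u(0)}$ being antilinear. Both are routine once one notes that $B_u^y$ is complex-linear, so conjugation by the linear unitary $U^y$ preserves antilinearity and the commutator structure.
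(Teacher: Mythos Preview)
Your proposal is correct and follows essentially the same route as the paper, which simply invokes ``a similar argument to section \ref{Lax}'' (i.e.\ the classical Lax argument behind Corollary \ref{UV}); you have merely spelled out the details, including the antiselfadjointness of $B_u^y,C_u^y$ already noted in the proof of Theorem \ref{Laxhier}. The antilinearity bookkeeping you flag at the end is indeed the only point requiring a moment's thought, and your remark that $B_u^y,C_u^y$ and $U^y,V^y$ are $\C$--linear disposes of it.
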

At this stage, we are going to generalize slightly the setting, for the needs of the next section. Let $y_1,\dots ,y_n$ be positive numbers,
and $a_1,\dots ,a_n$ be real numbers. We consider the functional
$$\hat J(u)=\sum _{k=1}^na_kJ^{y_k}(u)=(f(H_u^2)1\vert 1)\ ,\ f(s):=\sum _{k=1}^n \frac{a_k}{1+y_ks}\ ,$$
and the evolution equation
\begin{equation}\label{evolhat}
\dot u=X_{\hat J}(u)\ .
\end{equation}
By linearity from Theorem \ref{Laxhier}, it is clear that the solution of (\ref{evolhat}) satisfies
\begin{equation}\label{Laxhat}
\frac{dH_u}{dt}=[\hat B_u,H_u]\ ,\ \frac{dK_u}{dt}=[\hat C_u,K_u]\ ,
\end{equation}
with
\begin{equation}\label{BChat}
\hat B_u=\sum _{k=1}^na_k B_u^{y_k}\ ,\ \hat C_u=\sum _{k=1}^n a_kC_u^{y_k}\ .
\end{equation}
\begin{corollary}\label{UVhat}
Let $u$ be a solution of equation (\ref{evolhat}) on some time interval $\mathcal I$ containing $0$, define $\hat U=\hat U(t)$, $\hat V=\hat V(t)$ the solutions of the following linear ODEs
on $\mathcal L(L^2_+)$,
$$\frac {d\hat U}{dt}=\hat B_u\, \hat U\ ,\ \frac{d\hat V}{dt}=\hat C_u\, \hat V\ ,\ \hat U(0)=\hat V(0)=I\ .$$
Then $\hat U(t), \hat V(t)$ are unitary operators and
$$H_{u(t)}=\hat U(t)H_{u(0)} \hat U(t)^*\ ,\ K_{u(t)}=\hat V(t)K_{u(0)} \hat V(t)^*\ .$$
\end{corollary}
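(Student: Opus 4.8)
The plan is to reproduce the Lax argument already carried out for Corollaries~\ref{UV} and~\ref{UVy}, the only genuinely new input being the linearity of the Lax structure in the coefficients $a_k$ recorded in (\ref{Laxhat})--(\ref{BChat}). Thus the statement is essentially a formal consequence of what precedes, and I do not expect a real obstacle; the work lies in checking two algebraic facts and then running the standard conjugation computation.

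First I would verify that $\hat B_u$ and $\hat C_u$ are $\C$-linear and antiselfadjoint. Linearity is immediate, since the $a_k$ are scalars and each $F_u^{y_k}$, $G_u^{y_k}$ is $\C$-linear, being assembled from $\Pi$, from multiplication operators, and from rank-one terms. For antiselfadjointness I would invoke the proof of Theorem~\ref{Laxhier}, where $F_u^y=-y(L_u^y+R_u^y)$ with $L_u^y,R_u^y$ selfadjoint, and $G_u^y$ is likewise selfadjoint. Because $B_u^{y}=-iF_u^{y}$, $C_u^{y}=-iG_u^{y}$ and the $a_k$ are \emph{real}, the operators $\hat B_u=\sum_k a_kB_u^{y_k}$ and $\hat C_u=\sum_k a_kC_u^{y_k}$ are each $-i$ times a selfadjoint operator, hence antiselfadjoint.

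The unitarity of $\hat U(t)$ then follows as in Lax's classical argument: differentiating $\hat U^*\hat U$ and using $\dot{(\hat U^*)}=\hat U^*\hat B_u^*=-\hat U^*\hat B_u$ yields $\frac{d}{dt}(\hat U^*\hat U)=\hat U^*(\hat B_u^*+\hat B_u)\hat U=0$, so that $\hat U^*\hat U\equiv I$; the same computation gives $\hat U\hat U^*\equiv I$, and identically for $\hat V$ with $\hat C_u$ in place of $\hat B_u$.

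For the conjugation identities I would set $W(t):=\hat U(t)^*H_{u(t)}\hat U(t)$ and differentiate, using the Lax equation $\dot H_u=[\hat B_u,H_u]$ from (\ref{Laxhat}) together with $\dot{\hat U}=\hat B_u\hat U$ and $\dot{(\hat U^*)}=-\hat U^*\hat B_u$. The three terms produced by the product rule cancel in pairs, so $W$ is constant and equal to $W(0)=H_{u(0)}$, which is exactly $H_{u(t)}=\hat U(t)H_{u(0)}\hat U(t)^*$; the identity for $K$ is obtained verbatim with $\hat V,\hat C_u$ in place of $\hat U,\hat B_u$. The only point deserving a word of care is the $\C$-antilinearity of $H_u$ and $K_u$: since these operators are $\R$-linear in their argument and we differentiate in the real variable $t$, the usual product rule applies unchanged, so the cancellation survives and no difficulty arises beyond the bookkeeping already done for Corollary~\ref{UVy}.
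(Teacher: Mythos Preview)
Your proposal is correct and follows exactly the route the paper has in mind: the paper does not spell out a proof of Corollary~\ref{UVhat} but simply records the Lax equations (\ref{Laxhat})--(\ref{BChat}) and relies on the classical Lax argument already invoked for Corollaries~\ref{UV} and~\ref{UVy}. Your write-up is a faithful expansion of that argument, including the needed observation that the $a_k$ are real so that $\hat B_u,\hat C_u$ inherit antiselfadjointness.
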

As a consequence of this corollary, if we start from an initial datum $u(0)$ such that $H_{u(0)}$ is a trace class operator, 
then $H_{u(t)}$ is trace class for every $t$, with the same trace norm. By Peller's theorem \cite{P}, Chap. 6, Theorem 1.1, 
 the trace norm of $H_u$ is equivalent to 
the norm of $u$ in the Besov space $B^1_{1,1}$, which is contained into $W$ and contains $H^s_+$ for every $s>1$. Consequently, if 
$u(0)\in H^s_+$ for some $s>1$, then $u(t)$ stays bounded in $W$. We claim that, if $u(0)$ is in $\mathcal V(d)$, the evolution can be continued for all time. Moreover,
since the ranks of $H_{u(t)}$ and $K_{u(t)}$ are conserved in view of Corollary \ref{UVhat}, this evolution takes place in $\mathcal V(d)$
if $u(0)\in \mathcal V(d)$.
\begin{corollary}\label{globalflow}
The equation (\ref{evolhat}) defines a smooth flow on $H^s_+$ for every $s>1$ and on $\mathcal V(d)$ for every $d$.
\end{corollary}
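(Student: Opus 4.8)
The plan is to build each flow from the Cauchy--Lipschitz theorem supplemented by an a priori bound that rules out finite--time breakdown, the shared ingredient being the conservation of the trace norm of $H_u$ provided by Corollary~\ref{UVhat}.

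I would first set up the local theory. Since $\hat J=\sum_k a_kJ^{y_k}$, we have $X_{\hat J}=\sum_k a_kX_{J^{y_k}}$, and each $X_{J^y}(u)=2iyw^yH_uw^y$ is a smooth function of $u\in H^s_+$ for $s>1$: the assignment $u\mapsto H_u$ is $\C$--linear and bounded into $\mathcal L(H^s_+)$ (by the third inequality of Lemma~\ref{Lipschitz} together with $H^s_+\hookrightarrow W$), while $u\mapsto w^y=(I+yH_u^2)^{-1}(1)$ depends smoothly on $u$, being an analytic function of the smoothly $u$--dependent, boundedly invertible operator $I+yH_u^2$. The last inequality of Lemma~\ref{Lipschitz} shows that $X_{\hat J}$ is Lipschitz on bounded subsets of $H^s_+$; hence Picard's theorem furnishes, for each datum, a unique maximal solution depending smoothly on time and on the initial value, whose lifetime is bounded below in terms of the $H^s$ norm of the datum.

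Next I would produce the global bound on $H^s_+$, $s>1$. By Corollary~\ref{UVhat}, $H_{u(t)}=\hat U(t)H_{u_0}\hat U(t)^*$ with $\hat U(t)$ unitary, so every unitary invariant of $H_{u_0}$ is conserved; in particular its trace norm is. By Peller's theorem \cite{P} this norm is equivalent to $\Vert u\Vert_{B^1_{1,1}}$, and $B^1_{1,1}\hookrightarrow W$, so $\Vert u(t)\Vert_W$ stays bounded by a constant depending only on the datum. The crux is then a tame estimate, in the spirit of the global well--posedness proof for the Szeg\H{o} equation in \cite{GG}: feeding this conserved $W$--bound into the inequalities of Lemma~\ref{Lipschitz} and the identity $w^y=1-yH_u^2w^y$, one should obtain a differential inequality of the form $\frac{d}{dt}\Vert u(t)\Vert_{H^s}\le C(\Vert u(t)\Vert_W)\,\Vert u(t)\Vert_{H^s}$, the most singular contributions being eliminated by the factor $i$ in $X_{J^y}$ and by the self--adjointness of $H_u^2$ and $K_u^2$. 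Gronwall's lemma would then preclude finite--time blow--up, making the maximal solution global and the flow smooth. I expect this tame estimate --- arranging that the coefficient depends only on the conserved quantity $\Vert u\Vert_W$ and not on $\Vert u\Vert_{H^s}$ --- to be the main obstacle, precisely because the conservation laws do not control the high regularity, as the instability of Section~\ref{instab} shows.

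Finally I would treat $\mathcal V(d)$. For $u_0\in\mathcal V(d)\subset H^s_+$ the local solution keeps the ranks of $H_{u(t)}$ and $K_{u(t)}$ equal to their initial values (Corollary~\ref{UVhat}), hence stays in $\mathcal V(d)$; thus $X_{\hat J}$ is tangent to the finite--dimensional manifold $\mathcal V(d)$ and equation~(\ref{evolhat}) restricts there to a smooth autonomous ODE. To see that this ODE is complete, write $u=A/B$ as in Subsection~\ref{finite}: a solution can leave $\mathcal V(d)$ only through a rank drop, which is excluded since the ranks are conserved, or through a zero of $B$ approaching the unit circle, which would force $\Vert u\Vert_W\ge\Vert u\Vert_{L^\infty}\to\infty$ and is excluded by the conserved $W$--bound (here $H_{u_0}$ is trace class automatically, being of finite rank). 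Consequently $u(t)$ remains in a compact subset of $\mathcal V(d)$, the restricted ODE is global, and by uniqueness this flow agrees with the $H^s$ flow on $\mathcal V(d)$.
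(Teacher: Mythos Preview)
Your argument for $\mathcal V(d)$ has a genuine gap. You assert that a zero of $B$ approaching the unit circle forces $\Vert u\Vert_{L^\infty}\to\infty$; but this is true only if the numerator $A$ stays bounded away from zero at the limiting pole. Nothing in your argument prevents the scenario in which a root of $B$ tends to the circle while $A$ simultaneously develops a nearby zero, so that pole and zero nearly cancel. Concretely, in $\mathcal V(3)$ take
\[
u_n(z)=\frac{(1-2p_n)z+1}{1-p_nz}\ ,\qquad p_n\uparrow 1,\ p_n\ne\tfrac12\ .
\]
Then $a_n+b_np_n=1-p_n\ne0$, so $u_n\in\mathcal V(3)$ for every $n$; yet $\hat u_n(k)=(1-p_n)p_n^{\,k-1}$ for $k\ge1$, hence $\Vert u_n\Vert_W=2$ while $\Vert u_n\Vert_{H^s}^2\asymp(1-p_n)^{1-2s}\to\infty$ for every $s>\tfrac12$. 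Thus $\{u\in\mathcal V(d):\Vert u\Vert_W\le R\}$ is \emph{not} relatively compact in $\mathcal V(d)$, and the conserved $W$--bound together with rank conservation at each fixed time does not confine the trajectory to a compact set. What would actually rule out this degeneration along the flow is the conservation of the \emph{individual} singular values of $H_u$ and $K_u$ --- a lower bound on the smallest one prevents the near--cancellation above --- but you invoke only the trace norm.

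The paper avoids the compactness issue entirely by proving a tame estimate on the vector field itself (Lemma~\ref{4avril}): for $u\in\mathcal V(d)$ with $\Vert u\Vert_W\le R$,
\[
\Vert X_{J^y}(u)\Vert_{H^s}\le C(d,R,y,s)\,(1+\Vert u\Vert_{H^s})\ .
\]
The point is to bound $\Vert w^y\Vert_W$ using only $\Vert u\Vert_W$ and the rank. Since $H_u^2$ acts on an $N$--dimensional space, $N=\left[\frac{d+1}2\right]$, the Cayley--Hamilton theorem expresses $(I+yH_u^2)^{-1}$ on the range of $H_u$ as a polynomial $\sum_{k=0}^N a_k(H_u^2)^k$ whose coefficients, written explicitly in terms of the elementary symmetric functions of the eigenvalues, satisfy $|a_k|\le1$. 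Hence $\Vert w^y\Vert_W$ is bounded by a constant depending only on $d$, $R$, $y$. Lemma~\ref{Lipschitz} then gives the linear growth of $X_{J^y}$ in $\Vert u\Vert_{H^s}$, and Gronwall rules out finite--time blow--up (with at most exponential growth of the $H^s$ norm, consistent with the fact that the level sets of $\Vert\cdot\Vert_W$ in $\mathcal V(d)$ are noncompact). This is precisely the tame estimate you were hoping for, but its proof is specific to finite rank.

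As for the $H^s_+$ part, your caution is warranted: the paper does \emph{not} prove a global flow on $H^s_+$ for arbitrary $y$ and data. The Remark following Lemma~\ref{4avril} only asserts global continuation when $y\,{\rm Tr}\,\vert H_{u_0}\vert$ is small enough, which is exactly the regime in which the naive bound $\Vert w^y\Vert_W\le 1+y\Vert u\Vert_W^2\Vert w^y\Vert_W$ closes. Read the corollary as asserting the local smooth flow on $H^s_+$ (already given by Lemma~\ref{Lipschitz} and Cauchy--Lipschitz) together with the \emph{global} flow on each $\mathcal V(d)$.
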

In view of the Gronwall lemma, the statement is an easy consequence of the following estimate.
\begin{lemma}\label{4avril}
Let $R, y\ge 0, s>1$ be given. There exists $C(d,R,y,s)>0$ such that, for every $u\in \mathcal V(d)$ with $\Vert u\Vert _W\le R$, 
$$\Vert X_{J^y}(u)\Vert _{H^s}\le C(d,R,y,s)(1+\Vert u\Vert _{H^s})\ .$$
\end{lemma}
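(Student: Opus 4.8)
The goal is to bound $\|X_{J^y}(u)\|_{H^s}$ for $u$ ranging over the finite-rank manifold $\mathcal V(d)$ with controlled Wiener norm. Recalling that
$$X_{J^y}(u)=2iy\, w^y H_u w^y\ ,\quad w^y=(I+yH_u^2)^{-1}(1)\ ,$$
the plan is to estimate each factor in the $H^s$ norm using the multiplicative inequalities collected in Lemma \ref{Lipschitz}. The product estimate there shows that $H^s_+$ is an algebra modulo the Wiener norm, so the essential reduction is to control $\|w^y\|_{H^s}$ and $\|H_u w^y\|_{H^s}$ by a constant times $(1+\|u\|_{H^s})$, given that $\|u\|_W\le R$.

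First I would observe that the $W$-bound does most of the work for the middle Hankel operator: by the first inequality of Lemma \ref{Lipschitz}, $\|H_u g\|_W\le\|u\|_W\|g\|_W$, and since $w^y=1-yH_u^2 w^y$ with $\|w^y\|_{L^2}\le 1$, a Neumann/fixed-point argument controls $\|w^y\|_W$ by a constant $C(R,y)$. The third inequality, $\|H_u f\|_{H^s}\le\|u\|_{H^s}\|f\|_W$, then gives $\|H_u w^y\|_{H^s}\le\|u\|_{H^s}\|w^y\|_W\le C(R,y)\|u\|_{H^s}$. For $\|w^y\|_{H^s}$ I would differentiate the relation $w^y=1-yH_u^2 w^y$ in the $H^s$ scale: writing $H_u^2 w^y=H_u(H_u w^y)$ and applying the third inequality twice, one sees that $\|w^y\|_{H^s}$ is bounded in terms of $\|u\|_{H^s}$, $\|w^y\|_W$ and $\|H_u w^y\|_W$, all of which are already controlled. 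Assembling these with the algebra-type fifth inequality applied to the product $w^y\cdot(H_u w^y)$ yields the claimed bound, linear in $\|u\|_{H^s}$.

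The point where the statement genuinely uses membership in $\mathcal V(d)$, rather than mere boundedness in $W$, is the promotion of the $W$-control to $H^s$-control: the fourth inequality of Lemma \ref{Lipschitz} bounds $\|w^y\|_{H^s}$ only for $s>1$ and in terms of $\|u\|_{H^s}^2$, which would be quadratic. The finite-rank hypothesis lets me do better. Since $u\in\mathcal V(d)$ is a rational function of ${\rm e}^{ix}$ of controlled degree, all the relevant operators $H_u^2$, $K_u^2$, $S^*$ act on the fixed finite-dimensional space $\mathrm{ran}\,H_u$, and the operator norms, together with a uniform lower bound on the spectral gap keeping $(I+yH_u^2)^{-1}$ under control, depend only on $d$, $R$ and $y$. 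This is exactly where the dimension $d$ enters the constant $C(d,R,y,s)$, and it is the step I expect to require the most care: one must verify that the quantities bounding $w^y$ and $H_u w^y$ in the high-frequency $H^s$ norm can be taken uniform over $\mathcal V(d)\cap\{\|u\|_W\le R\}$ and linear, rather than quadratic, in $\|u\|_{H^s}$. Once that uniformity is in hand, the combination of the estimates of Lemma \ref{Lipschitz} closes the argument, and feeding the resulting bound into Gronwall's lemma gives Corollary \ref{globalflow}.
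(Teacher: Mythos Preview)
Your overall architecture is right: reduce via Lemma~\ref{Lipschitz} to a $W$--bound on $w^y$, then feed that bound into the third and fifth inequalities to get linearity in $\Vert u\Vert_{H^s}$. But you have misplaced the point where the finite-rank hypothesis is actually used, and this creates a genuine gap.

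The problematic step is your claim that ``a Neumann/fixed-point argument controls $\Vert w^y\Vert_W$ by a constant $C(R,y)$'' from $w^y=1-yH_u^2w^y$ and $\Vert w^y\Vert_{L^2}\le 1$. The only estimate available that stays in the Wiener algebra is $\Vert H_u f\Vert_W\le\Vert u\Vert_W\Vert f\Vert_W$, which yields $\Vert w^y\Vert_W\le 1+yR^2\Vert w^y\Vert_W$ and closes only when $yR^2<1$. None of the other inequalities in Lemma~\ref{Lipschitz} lets you pass from $\Vert w^y\Vert_{L^2}\le 1$ to a $W$--bound without bringing in $\Vert u\Vert_{H^s}$, and the Remark following the lemma confirms that for general $u$ one indeed needs such a smallness condition. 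So a constant $C(R,y)$ independent of $d$ is not available here.

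This is exactly where the paper spends the finite-rank hypothesis. With $N=\left[\tfrac{d+1}{2}\right]={\rm rk}\,H_u$, the Cayley--Hamilton theorem gives a polynomial identity $(I+yH_u^2)^{-1}=\sum_{k=0}^{N}a_k\,H_u^{2k}$ on the range of $H_u$, with coefficients satisfying $|a_k|\le 1$ (they are alternating ratios of partial sums of elementary symmetric functions of the eigenvalues). Hence $w^y=\sum_{k=0}^{N}a_kH_u^{2k}(1)$ and $\Vert w^y\Vert_W\le\sum_{k=0}^N R^{2k}$, a bound depending on $d,R,y$ but requiring no smallness. Once you have this, your second and third paragraphs go through verbatim and already deliver linearity in $\Vert u\Vert_{H^s}$; no further use of $\mathcal V(d)$ is needed there.
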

\begin{proof} By using Lemma \ref{Lipschitz},
we are reduced to prove
$$\Vert w^y\Vert _{W}\le B(d,R,y)\ .$$
We set $N=\left [ \frac {d+1}2\right ]\ .$ The above estimate is an easy consequence of
$$(I+H_u^2)^{-1}=\sum _{k=0}^{N}a_kH_u^{2k}\ ,$$
with $\vert a_k\vert \le 1$ for $k=0,\dots ,N$. In fact,  the Cayley--Hamilton theorem yields
$$(H_u^2)^{N+1}=\sum _{k=1}^N (-1)^{k-1}S_k(H_u^2)^{N-k+1}\ ,\ S_k:=\sum _{\ell _1<\dots <\ell _k}\rho _{\ell _1}^2\dots \rho_{\ell _k}^2\ ,$$
and  one can  easily check that
$$a_k=(-1)^k\ \frac{\displaystyle{1+\sum _{j=1}^{N-k}S_j}}{\displaystyle {1+\sum _{j=1}^NS_j}}\ ,\ k=0,\dots ,N\ .$$
where $\rho _1^2\ge \dots \ge \rho _N^2$ are the positive eigenvalues of $H_u^2$, listed with their multiplicities.
\end{proof}
\begin{remark}
For general data $u(0)\in H^s_+$, one can prove similarly that the solution can be continued for all time if
$y\Vert u(0)\Vert _{H^s}$ is small enough, or just if $y\, {\rm Tr}\vert H_{u(0)}\vert $ is small enough.
\end{remark}
Our next step is to derive an explicit formula for the solution of (\ref{evolhat}) along the same lines as in section \ref{preuve}.
The starting points are the formulae
\begin{eqnarray*}
B_u^y(1)&=&iyJ^y(u)w^y\\
C_u^y-B_u^y&=&-iy^2(\cdot \vert H_uw^y)H_uw^y\\
&=&iyJ^y(u)((I+yH_u^2)^{-1}-(I+yK_u^2)^{-1})\ ,
\end{eqnarray*}
where we have used the identity $K_u^2=H_u^2-(\cdot \vert u)u\ .$ This leads to
\begin{eqnarray*}
\hat B_u(1)&=&ig(H_u^2)(1)\ ,\ g(s):=\sum _{k=1}^n \frac{a_ky_kJ^{y_k}(u)}{1+y_ks}\ ,\\
\hat C_u -\hat B_u&=&i(g(H_u^2)-g(K_u^2))\ .
\end{eqnarray*}
Arguing exactly as in section \ref{preuve}, we obtain the following formula.
\begin{theorem}\label{explicithat}
 The solution $u$ of equation
(\ref{evolhat}) with initial data $u(0)=u_0\in H^s_+, s>1,$ is given by
\begin{equation}
\underline u(t,z)=((I-z{\rm e}^{2itg(H_{u_0}^2)}{\rm e}^{-2itg(K_{u_0}^2)}S^*)^{-1}{\rm e}^{2itg(H_{u_0}^2)}u_0\, \vert \, 1)\ ,\ z\in D\ ,
\end{equation}
where
$$g(s):=\sum _{k=1}^n \frac{a_ky_kJ^{y_k}(u)}{1+y_ks}\ .$$
\end{theorem}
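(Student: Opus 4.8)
The plan is to follow section~\ref{preuve} line by line, substituting the hierarchy generators $\hat B_u,\hat C_u$ for the Szeg\H{o} generators $B_u,C_u$ and the two identities recorded just before the statement, $\hat B_u(1)=ig(H_u^2)(1)$ and $\hat C_u-\hat B_u=i(g(H_u^2)-g(K_u^2))$, for the corresponding ones $B_u(1)=-\tfrac i2H_u^2(1)$, $C_u-B_u=\tfrac i2(K_u^2-H_u^2)$ used there. Since $u\in C^\infty(\mathcal I,H^s_+)$ with $s>1$, Corollary~\ref{UVhat} applies and provides linear unitaries $\hat U(t),\hat V(t)$ with $H_{u(t)}=\hat U(t)H_{u_0}\hat U(t)^*$ and $K_{u(t)}=\hat V(t)K_{u_0}\hat V(t)^*$; in particular no approximation argument is needed and I work directly in $H^s_+$. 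Applying the Taylor identity (\ref{Taylor}) to $v=u(t)$ and conjugating by $\hat U(t)$ gives, exactly as in (\ref{ubar}),
$$\underline u(t,z)=((I-z\,\hat U(t)^*S^*\hat U(t))^{-1}\hat U(t)^*u(t)\,\vert\,\hat U(t)^*1)\ ,$$
so it remains to compute $\hat U(t)^*1$, $\hat U(t)^*u(t)$, and $\hat U(t)^*S^*\hat U(t)$ on the range of $H_{u_0}$.

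First I would compute $\frac{d}{dt}\hat U^*1=-\hat U^*\hat B_u(1)=-i\hat U^*g(H_u^2)(1)=-ig(H_{u_0}^2)\hat U^*1$, using that $\hat U^*g(H_{u(t)}^2)=g(H_{u_0}^2)\hat U^*$ by functional calculus and Corollary~\ref{UVhat}. Because the operators $g(H_{u_0}^2)$ all commute, this integrates at once to $\hat U(t)^*1=e^{-itg(H_{u_0}^2)}1$ (granting momentarily that $g$ is constant in $t$; see below). From $u(t)=H_{u(t)}1$ one gets $\hat U^*u(t)=H_{u_0}\hat U^*1$, and pushing $H_{u_0}$ through the exponential by antilinearity, together with the reality of $g$, yields $\hat U^*u(t)=e^{itg(H_{u_0}^2)}u_0$. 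For the last term, $\hat U^*S^*\hat U H_{u_0}=\hat U^*K_{u(t)}\hat U=(\hat U^*\hat V)K_{u_0}(\hat U^*\hat V)^*$, and differentiating $W:=\hat U^*\hat V$ gives $\dot W=\hat U^*(\hat C_u-\hat B_u)\hat V=ig(H_{u_0}^2)W-iWg(K_{u_0}^2)$, whence $W=e^{itg(H_{u_0}^2)}e^{-itg(K_{u_0}^2)}$. Substituting and again moving $K_{u_0}$ past the exponentials by antilinearity identifies $\hat U^*S^*\hat U=e^{itg(H_{u_0}^2)}e^{-2itg(K_{u_0}^2)}S^*e^{itg(H_{u_0}^2)}$ on the range of $H_{u_0}$. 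Plugging the three expressions into the display and transporting $e^{-itg(H_{u_0}^2)}$ across the inner product (where it combines with the exponential already carried by $\hat U^*u(t)$ and conjugates the resolvent) doubles all exponents and produces the announced formula; the fact that $\overline{\mathrm{ran}}\,K_{u_0}\subseteq\overline{\mathrm{ran}}\,H_{u_0}$ (equivalently $\ker H_{u_0}\subseteq\ker K_{u_0}$) ensures the explicit operator preserves $\overline{\mathrm{ran}}\,H_{u_0}$, so the resolvent may legitimately be evaluated there.

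The step requiring the most care is the parenthetical assumption that $g$ is time-independent, i.e. that the coefficients $J^{y_k}(u(t))$ are conserved along (\ref{evolhat}); this is exactly what turns the three ODEs above into autonomous equations with clean exponential solutions. Although $H_{u(t)}$ is merely unitarily equivalent to $H_{u_0}$ while $J^y$ is not a purely spectral quantity, the conservation can be extracted as a by-product of the first computation rather than assumed: solving the a priori non-autonomous but commuting ODE for $\hat U^*1$ gives $\hat U(t)^*1=e^{-iG_t(H_{u_0}^2)}1$ with $G_t:=\int_0^tg_s\,ds$ a real function of $H_{u_0}^2$, and then
$$J^y(u(t))=((I+yH_{u_0}^2)^{-1}\hat U^*1\,\vert\,\hat U^*1)=((I+yH_{u_0}^2)^{-1}1\,\vert\,1)=J^y(u_0)\ ,$$
since $e^{-iG_t(H_{u_0}^2)}$ is unitary and commutes with $(I+yH_{u_0}^2)^{-1}$. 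Hence $g_s\equiv g$ and $G_t=tg$, which closes the argument; alternatively one may simply invoke the involution of the functionals $J^y$ coming from the integrable structure of \cite{GG2}. Everything else is the antilinearity bookkeeping of section~\ref{preuve}.
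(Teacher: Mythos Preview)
Your proof is correct and follows the paper's own approach exactly: the paper simply writes ``Arguing exactly as in section~\ref{preuve}, we obtain the following formula,'' and you have carried out precisely that transcription with the substitutions $\hat B_u,\hat C_u$ for $B_u,C_u$ and the identities $\hat B_u(1)=ig(H_u^2)(1)$, $\hat C_u-\hat B_u=i(g(H_u^2)-g(K_u^2))$ in place of their Szeg\H{o} counterparts. Your additional paragraph on the constancy of $g$ along the flow is a genuine improvement in rigor: the paper tacitly uses that the $J^{y_k}(u(t))$ are conserved (otherwise the three ODEs would not integrate to clean exponentials), but never states or justifies this within the argument; your bootstrap, extracting $J^y(u(t))=J^y(u_0)$ from the already-computed form of $\hat U(t)^*1$, is self-contained and avoids an external appeal to the Poisson-commutation results of \cite{GG2}.
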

\section{Proof of the quasiperiodicity theorem}
In this section, we prove Theorem \ref{quasip}. Let $u_0\in \mathcal V(d)$ be given. Firstly we show that $t\mapsto u(t)$ is a quasi periodic 
function valued into $\mathcal V(d)$. Denote by $\Sigma $ the union of the spectra of 
$H_{u_0}^2$ and $K_{u_0}^2$. We claim that it is enough to prove that, for any function $\omega  :\Sigma \to \T $, the formula
$$\underline {\Phi (\omega )}(z)= ((I-z{\rm e}^{-i\omega }(H_{u_0}^2){\rm e}^{i\omega }(K_{u_0}^2)S^*)^{-1}{\rm e}^{-i\omega }(H_{u_0}^2)u_0\, \vert \, 1)\ ,\ z\in D\ ,$$
defines an element $\Phi (\omega ) \in \mathcal V(d)$. Indeed, if this is 
established, Theorem \ref{explicit} exactly claims that $u(t)=\Phi (t\omega )$, where, for every $s\in \Sigma $, $\omega (s)=s \text{ mod } 2\pi\ .$
Moreover, it is clear from the above formula that $\Phi (\omega )$ is a rational function with coefficients smoothly dependent on $\omega \in \T ^\Sigma $, so that $\Phi $ is smooth as a map from $\T ^\Sigma $ to $\mathcal V(d)$.
\s
Let $\omega \in \T ^\Sigma $. For each $s\in \Sigma $, we represent $\omega (s)$ by some element of $[0,2\pi )$, still denoted
by $\omega (s)$. Fix $n=\vert \Sigma \vert $ and let $y_1,\dots ,y_n$ be $n$ positive numbers pairwise distinct. Then the matrix
$$\left (\frac 1{1+y_ks}\right )_{k=1,\dots ,n, s\in \Sigma }$$
is invertible, hence the linear system
$$\omega (s)=-2\sum _{k=1}^n \frac{a_ky_kJ^{y_k}(u_0)}{1+y_ks}\ ,\ s\in \Sigma \ $$
has a unique solution $a_1,\dots ,a_n$. Using Theorem \ref{explicithat}, $\Phi (\omega )$ is the value at time $t=1$
of the solution $u$ of equation (\ref{evolhat}) with parameters $a_1,\dots ,a_n,y_1,\dots ,y_n$. By Corollary \ref{globalflow}, it belongs
to $\mathcal V(d)$. This proves quasi periodicity.
\s
Since $\Phi $ is a continuous mapping, $\Phi (\T ^\Sigma )$ is a compact subset of $\mathcal V(d)$. On the other hand, for every
$s$, the $H^s$ norm is continuous on $\mathcal V(d)$. It is therefore bounded on this compact subset, which contains the integral curve
issued from $u_0$. This completes the proof of Theorem \ref{quasip}.
\begin{remark} It is tempting to adapt the above proof of quasi periodicity to non finite rank solutions. However, even assuming that one can define a flow on $H^s_+$ for all $y$ with convenient estimates for large $y$, this strategy meets 
a serious difficulty. Indeed, on the one hand, the construction of a Hamiltonian flow on $H^s_+$ for 
$$\hat J(u)=(f(H_u^2)1\vert 1)$$
requires a minimal regularity for $f$, say $C^1$, which, if $f$ is represented as
$$f(s)=\int _0^\infty \frac{a(y)}{1+ys}\, d\mu (y)$$
for some positive measure $\mu $  and some function $a$ on $\R _+$, imposes a decay condition as
$$\int _0^\infty y\vert a(y)\vert \, d\mu (y)\ .$$
On the other hand, $\Sigma $ is made of  a sequence of positive numbers converging to $0$ and of its limit, and the interpolation problem
$$\omega (s)=-2\int _0^\infty \frac{ya(y)J^y(u_0)}{1+ys}\, d\mu (y)$$
would have a solution only if $\omega :\Sigma \to \T $ is continuous on $\Sigma $. Unfortunately,
the space $C(\Sigma , \T )$ is not compact, neither for the simple convergence, nor for the uniform convergence.
Therefore the question of large time dynamics of non finite rank solutions of the cubic Szeg\H{o} equation remains widely open. 
\end{remark}

\end{document}